\newtheorem{prop}{Proposition}[section]
\newtheorem{lemma}[prop]{Lemma}
\newtheorem{thm}[prop]{Theorem}
\theoremstyle{definition}
\newtheorem{rem}[prop]{Remark}
\newcommand{\Aut} {\mathrm{Aut}}
\newcommand{\Isom} {\mathrm{Isom}}
\newcommand{\End} {{\mathrm{End}}}
\newcommand{\ur}{\mathrm{ur}}
\newcommand{\ord}{\mathrm{ord}}
\def\BF{\mathbb{F}}
\def\BQ{\mathbb{Q}}
\def\BZ{\mathbb{Z}}
\def\CO{\mathcal{O}}
\def\CE{\mathcal{E}}
\def\Fp{\mathfrak{p}}
\begin{document}

\title[Divisibility of the coefficients of modular polynomials]{Divisibility of the coefficients of modular polynomials}

\author{Florian Breuer}
\address{School of Computer and Information Sciences, University of Newcastle, Callaghan, NSW 2308, Australia}
\email{florian.breuer@newcastle.edu.au}

\subjclass[2020]{11G07; 11G15}
\keywords{modular polynomials, isogenies, elliptic curves, complex multiplication}

\begin{abstract}
    Let $N>1$ and let $\Phi_N(X,Y)\in\mathbb{Z}[X,Y]$ be the modular polynomial which vanishes precisely at pairs of $j$-invariants of elliptic curves linked by a cyclic isogeny of degree $N$. In this note we study the divisibility of the coefficients of $\Phi_N(X+J, Y+J)$ for certain algebraic numbers $J$, in particular $J=0$ and other singular moduli. It turns out that these coefficients are highly divisible by small primes at which $J$ has supersingular reduction.
\end{abstract}

\maketitle

\section{Introduction}

Let $N$ be a positive integer and consider the classical modular polynomial $\Phi_N(X,Y)\in\BZ[X,Y]$
which vanishes precisely at pairs $(j_1,j_2)$ of $j$-invariants of elliptic curves linked by a cyclic $N$-isogeny. It has degree 
\[
\deg_X\Phi_N(X,Y) = \deg_Y\Phi_N(X,Y) := \psi(N) = N\prod_{p|N}\left(1+\frac{1}{p}\right).
\]
While the coefficients of $\Phi_N(X,Y)$ are notoriously large \cite{BPG2025, BP2024,  BS2010, Cohen1984} they are also highly divisible by small primes \cite{WANG202614}. 
Our first main result gives lower bounds on the $p$-orders of these coefficients.

\begin{thm}
	\label{thm:main}
    Let $N > 1$, and write 
    $\Phi_N(X,Y) = \sum_{0\leq i,j \leq \psi(N)}a_{i,j}X^iY^j.$
    Then for $i+j < \psi(N)$ the following hold.
    \begin{enumerate}
        \item If $2\nmid N$, then $v_2(a_{i,j}) \geq 15(\psi(N) -i -j)$.
        \item If $3\nmid N$, then $v_3(a_{i,j}) \geq 3(\psi(N) -i -j)$; moreover, 
        $v_3(a_{i,j}) \geq \lceil\frac{9}{2}(\psi(N) -i -j)\rceil$
        if $N\equiv 1 \bmod 3$.
        \item If $5\nmid N$ then
        $v_5(a_{i,j}) \geq 3(\psi(N) -i -j)$.
        \item If $p \geq 11$, $p \equiv 2 \bmod 3$ and $p\nmid N$, then $v_p(a_{i,j}) \geq 3\big(C_0(N,p)-i-j\big)$,
        where\
        \[
        C_0(N,p) := \ord_X (\Phi_N(X,0) \bmod p).
        \]
    \end{enumerate}
\end{thm}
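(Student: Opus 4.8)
The strategy is to trace all the estimates back to the curve $E_0\colon y^2=x^3+1$, with $j(E_0)=0$, to its supersingular reductions, and to the automorphism $[\zeta_3]\in\Aut(E_0)$ of order $3$, via deformation theory. As a preliminary reformulation, write $\Phi_N(X,Y)=\sum_j b_j(X)Y^j$ with $b_j(X)=\sum_i a_{i,j}X^i$. For $f=\sum c_\beta \mathbf{x}^\beta\in\BZ_p[X,Y]$ and any ideal $\mathfrak{a}$ generated by monomials times powers of $p$, one has $f\in\mathfrak{a}$ if and only if $c_\beta\mathbf{x}^\beta\in\mathfrak{a}$ for every $\beta$; hence the asserted bounds $v_p(a_{i,j})\ge 3(C_0-i-j)$ for all $i,j$ are equivalent to $\Phi_N\in(p^3,X,Y)^{C_0}\BZ_p[X,Y]$, and, grouping by the power of $Y$, to the statements $b_j(X)\in(p^3,X)^{\max(0,C_0-j)}\BZ_p[X]$ for each $j$. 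So it suffices to bound each $b_j$ $p$-adically.

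For the geometric and deformation-theoretic input, take $p\ge 5$ with $p\equiv 2\bmod 3$. Then $E_0$ has good supersingular reduction $\overline{E_0}$ at $p$, with $\Aut(\overline{E_0})=\mu_6$; its cyclic $N$-subgroups are $\bar C_1,\dots,\bar C_{\psi(N)}$, and from the standard factorisation $\Phi_N(j(E),Y)=\bar c(j(E))\prod_C(Y-j(E/C))$ — where $\bar c$ is the leading coefficient of $\Phi_N$ in $Y$, with $\bar c(0)=a_{0,\psi(N)}=\pm1$ — together with the symmetry of $\Phi_N$, one gets $\Phi_N(X,0)\equiv\pm\prod_i\bigl(X-j(\overline{E_0}/\bar C_i)\bigr)\bmod p$, so that $C_0(N,p)=\#\{\,i:j(\overline{E_0}/\bar C_i)=0\,\}$. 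Now let $W=W(\overline{\BF_p})$ and let $\mathcal{E}/W[[t]]$ be a versal deformation of $\overline{E_0}$, normalised so that $\mathcal{E}_{t=0}=E_0\otimes W$ and so that the residual action of $\mu_3=\mu_6/\{\pm1\}$ on $W[[t]]$ is $t\mapsto\zeta_3 t$; since $p\nmid 6$ this action is tame, so $j(\mathcal{E}_t)\in W[[t^3]]$ and $j(\mathcal{E}_t)=c_3t^3+c_6t^6+\cdots$ with $c_3\in W^\times$ (the coarse map $W[[j]]\hookrightarrow W[[t]]$ being finite flat of degree $3$). Because $N\in W^\times$ and $W[[t]]$ is strictly Henselian, the $\psi(N)$ cyclic $N$-subgroups of $\mathcal{E}$ are defined over $W[[t]]$; put $J_i(t)=j(\mathcal{E}_t/C_i)\in W[[t]]$, so that $\Phi_N(j(\mathcal{E}_t),Y)=\bar c(j(\mathcal{E}_t))\prod_i(Y-J_i(t))$ with $\bar c(j(\mathcal{E}_t))\in W[[t]]^\times$. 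The crucial local estimate is: if $j(\overline{E_0}/\bar C_i)=0$, then $\mathcal{E}_\bullet/C_i$ is a deformation of $\overline{E_0}/\bar C_i\cong\overline{E_0}$, hence is pulled back from the versal family along some $u\mapsto u_i(t)\in W[[t]]$; since $\mathcal{E}_0/C_i$ is a $W$-lift of $\overline{E_0}$, we have $u_i(0)\in\mathfrak{m}_W=pW$, whence $u_i(t)\in(p,t)W[[t]]$ and $J_i(t)=c_3u_i(t)^3(1+\cdots)\in(p,t)^3W[[t]]$.

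These combine as follows. Up to the unit $\bar c(j(\mathcal{E}_t))$, the power series $b_j(j(\mathcal{E}_t))$ is the elementary symmetric function $e_{\psi(N)-j}(J_1(t),\dots,J_{\psi(N)}(t))$; for $j<C_0$ each of its monomials involves at least $C_0-j$ of the $J_i$ lying in $(p,t)^3$ (pigeonhole, since only $\psi(N)-C_0$ of them are unconstrained), so $b_j(j(\mathcal{E}_t))\in(p,t)^{3(C_0-j)}W[[t]]$. But $b_j(j(\mathcal{E}_t))$ also lies in $W[[t^3]]$ (as $j(\mathcal{E}_t)$ does), and the elementary identity $(p,t)^{3m}W[[t]]\cap W[[t^3]]=(p^3,t^3)^mW[[t^3]]$ upgrades this to $b_j(j(\mathcal{E}_t))\in(p^3,t^3)^{C_0-j}W[[t^3]]$. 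Since $c_3\in W^\times$ we have $W[[t^3]]=W[[X]]$ with $X=j(\mathcal{E}_t)$ and $(p^3,t^3)=(p^3,X)$; descending along the flat inclusion $\BZ_p[X]\subset W[[X]]$ (elementary because $(p^3,X)^m\supseteq(X)^m$) then yields $b_j(X)\in(p^3,X)^{C_0-j}\BZ_p[X]$, which is exactly the desired bound. For $p=11$ this is part (4); for $p=5$ it is part (3), since there $\overline{E_0}$ is the unique supersingular curve in characteristic $5$, so every $\bar C_i$ gives $j(\overline{E_0}/\bar C_i)=0$ and $C_0(N,5)=\psi(N)$.

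For $p=2$ and $p=3$ the same plan applies, but now $E_0$ has additive reduction over $\BQ_p$ and acquires good supersingular reduction only over a ramified extension $L_p/\BQ_p$; the automorphism group of the reduction is larger (of order $24$, resp. $12$), and it acts on the Lubin--Tate ring with wild ramification. Carrying the argument out over $\mathcal{O}_{L_p}$, one must determine the ramification index of $L_p$, replace the tamely ramified degree-$3$ statement $j=c_3t^3\cdot(\text{unit})$ by its wildly ramified substitute, and push all valuations through the ramified base; the clean lower bounds that survive are the constants $15$ (for $p=2$) and $3$ (for $p=3$), with the sharper $\lceil\tfrac92(\psi(N)-i-j)\rceil$ when $N\equiv1\bmod3$ obtained by passing further to $L_3(\zeta_3)$ (ramified of degree $2$, with $v_3(\zeta_3-1)=\tfrac12$). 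I expect the wild-ramification bookkeeping in characteristics $2$ and $3$ to be the main obstacle; for $p\ge5$ the only genuine work is making the deformation-theoretic claims above precise — above all the crucial local estimate, and the identification of the coarse map as tamely ramified of degree $3$.
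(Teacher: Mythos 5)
Your argument for parts (3) and (4) (all $p\geq 5$) is essentially complete and correct, and it takes a genuinely different route from the paper. The paper also starts from deformation theory, but it then specializes the family at $\psi(N)+1$ parameter values $\varepsilon_k$, uses the Gross--Zagier formula $v(j(E)-j(E'))=\frac12\sum_m\#\Isom_{A/\pi^m}(E,E')$ to control the valuations $v(j_k-j_l)$ and $v(j(\tilde E_k)-j(E_k))$, and recovers the coefficients by a Vandermonde/Cramer interpolation lemma. You instead work over the whole deformation ring $W[[t]]$, replace Gross--Zagier by the statement that the coarse map $W[[j]]\hookrightarrow W[[t]]$ is finite of degree $3$ with $(j)=(t^3)$, and replace interpolation by the ideal computation $(p,t)^{3m}W[[t]]\cap W[[t^3]]=(p^3,t^3)^mW[[t^3]]$. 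Your route avoids both the interpolation lemma and the citation of Gross--Zagier, at the cost of invoking versality and the quotient-family/base-change argument for each $C_i$; for $p\geq 5$ all the facts you use (tameness of the $\mu_3$-action, $c_3\in W^\times$ from the degree count mod $p$, $u_i(0)\in pW$) are correctly identified and standard.

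However, parts (1) and (2) are not proved. Your final paragraph defers exactly the content of those parts: the constants $15$, $3$ and $\lceil\tfrac92(\cdot)\rceil$ are asserted as ``the clean lower bounds that survive'' the wild ramification, with no derivation, and you yourself flag the bookkeeping as ``the main obstacle.'' This is where the real work lies. The tame degree count only gives $\ord_t(j)=12$ in characteristic $2$ (resp.\ $6$ in characteristic $3$), i.e.\ the exponents $12$ and $6$ of the paper's Theorem \ref{thm:bigprimes}; the improvement to $15$ for $p=2$, and the jump from $3$ to $9/2$ for $p=3$ when $N\equiv1\bmod3$, do not follow from the automorphism count alone. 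In the paper they come from explicit computations with V\'elu's formulae on concrete families (e.g.\ $y^2+y=x^3+2\varepsilon_kx$ over $\BQ_2^{\ur}$, giving $v_2(j_k)=15$, and $y^2=x^3+\varepsilon_k\pi x^2-\omega x$ over $\BQ_3^{\ur}(\sqrt{-3})$ with the special choice $\varepsilon_k=1+\varepsilon_k'\pi$ producing the extra factor when $N\equiv1\bmod3$), together with a verification that the isogenous curves $\tilde E_k$ satisfy the matching bound $v(j(\tilde E_k))\geq n$. You would need to supply the analogous mixed-characteristic analysis of $j$ and of the $N$-isogeny correspondence on the wildly ramified deformation space, including the dependence on $N\bmod 3$ (resp.\ $N\bmod 4$), before parts (1) and (2) can be considered established.
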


When $p\leq 5$, this was conjectured by Wang in \cite{WANG202614}, who proved some related results and showed moreover that it suffices to prove Theorem \ref{thm:main} for prime $N$. The result has applications to the study of reduction types of elliptic curves, see \cite{Wang2025}.

The polynomials $\Phi_N(X,Y)$ have important applications in cryptography and computational number theory. Given finer bounds on the sizes of individual coefficients $a_{i,j}$, Theorem \ref{thm:main} may lead to tighter bounds on the Chinese Remainder Theorem primes required for CRT-based algorithms (e.g. \cite{BLS2012, BOS2016, CL2005, KD2025}) to compute $\Phi_N(X,Y)$. Theorem \ref{thm:main} can also be used as a quick sanity check for computations of modular polynomials for large $N$.

Modular polynomials have been computed for many values of $N$, see e.g. \cite{Sutherland-website} where one may download the coefficients of $\Phi_N(X,Y)$ for all $N \leq 400$ and many larger prime values of $N$. The files are rather large.

One can save space by only storing the factors of the coefficients of $\Phi_N(X,Y)$ not predicted by Theorem \ref{thm:main}. When $N=5$ (see Table \ref{tab:Phi5}) this reduces the number of decimal digits needed from 523 to 298, a $43\%$ saving. However, for larger $N$ the relative savings dwindle, for example when $N=101$ we only get a reduction from $6,383,216$ to $5,606,370$ decimal digits, a  $12\%$ saving. Alternatively, it may be useful to store the coefficients of $\Phi_N(X,Y)$ in partially factorized form, e.g. factoring up to prime divisors $p < 3N$.

\medskip

More generally, we study the coefficients of $\Phi_N(X+J_1, Y+J_2)$ for certain algebraic numbers $J_1$ and $J_2$, see Theorems \ref{thm:bigprimes} and \ref{thm:smallprimes} below. 
In particular, if $J_1 = J_2 = J$ is one of the 13 rational singular moduli we have the following theorem.

\begin{table}[h]
{\small
    \centering
    \begin{tabular}{|l|l|l|l|} \hline
        $J$ & $J-1728$ & $D$ & $n_p$ \\ \hline\hline 
        
         $0$ & $-2^6\cdot 3^3$ & $-3$ & 
         $n_3 = \left\{\begin{array}{ll}
            9/2 & : N \equiv 1 \bmod 3 \\
            3 & : N \equiv 2 \bmod 3 
        \end{array}\right.$ \\ \hline

        $2^{4} \cdot 3^{3} \cdot 5^{3}$ & $2^{4} \cdot 3^{3} \cdot 11^{2}$ & $-12$ & 
        $n_2 = 19/2$ \\
        & & & 
        $n_3 = \left\{\begin{array}{ll}
            9/2 & : N \equiv 1 \bmod 3 \\
            3 & : N \equiv 2 \bmod 3 
        \end{array}\right.$ \\ \hline
        
        $-2^{15} \cdot 3 \cdot 5^{3}$ & $- 2^{6} \cdot 3 \cdot 11^{2} \cdot 23^{2}$ & $-27$ & 
        $n_3 = \left\{\begin{array}{ll}
            4/3 & : N \equiv \pm1 \bmod 6  \\
             1/2 & : N \equiv \pm 2 \bmod 6
        \end{array}\right.$\\ \hline
        
        $2^{6} \cdot 3^{3}$ & $0$ & $-4$ & 
        $n_2 = \left\{\begin{array}{ll}
            10 & : N \equiv 1 \bmod 4  \\
             9 & : N \equiv 3 \bmod 4
        \end{array}\right.$\\ \hline
        
        $2^{3} \cdot 3^{3} \cdot 11^{3}$ & $2^{3} \cdot 3^{6} \cdot 7^{2}$ & $-16$ & 
        $n_2 = \left\{\begin{array}{ll}
            5 & : N \equiv 1 \bmod 4  \\
            9/2 & : N \equiv 3 \bmod 4
        \end{array}\right.$ \\ \hline
        
        $- 3^{3} \cdot 5^{3}$ & $- 3^{6} \cdot 7$ & $-7$ & $n_7 = 1$ \\ \hline
        
        $3^{3} \cdot 5^{3} \cdot 17^{3}$ & $3^{8} \cdot 7 \cdot 19^{2}$ & $-28$ & 
        $n_7 = 1$ \\ \hline
        
        $2^{6} \cdot 5^{3}$ & $2^{7} \cdot 7^{2}$ & $-8$ & $n_2 = 19/2$ \\ \hline
        
        $- 2^{15}$ & $- 2^{6} \cdot 7^{2} \cdot 11$ & $-11$ & 
        $n_{11} = 1$ \\ \hline
        
        $- 2^{15} \cdot 3^{3}$ & $- 2^{6} \cdot 3^{6} \cdot 19$ & $-19$ & 
        $n_{19} = 1$ \\ \hline
        
        $- 2^{18} \cdot 3^{3} \cdot 5^{3}$ & $- 2^{6} \cdot 3^{8} \cdot 7^{2} \cdot 43$ & $-43$ & $n_{43} = 1$ \\ \hline
        
        $- 2^{15} \cdot 3^{3} \cdot 5^{3} \cdot 11^{3}$ & $- 2^{6} \cdot 3^{6} \cdot 7^{2} \cdot 31^{2} \cdot 67$ & $-67$ & $n_{67} = 1$ \\ \hline
        
        $- 2^{18} \cdot 3^{3} \cdot 5^{3} \cdot 23^{3} \cdot 29^{3}$ & $- 2^{6} \cdot 3^{6} \cdot 7^{2} \cdot 11^{2} \cdot 19^{2} \cdot 127^{2} \cdot 163$ & $-163$ & 
        $n_{163} = 1$ \\ \hline     
    \end{tabular}
    \caption{Exceptional valuations of coefficients of $\Phi_N(X+J, Y+J)$ for singular moduli $J\in\BZ$.}
    \label{tab:rationalJ}
    }
\end{table}

\vspace{-5mm}
\begin{thm}\label{thm:rationalJ}
    Let $J \in\BZ$ be a rational singular modulus, i.e. $J=j(E)$ for an elliptic curve $E$ with complex multiplication by an imaginary quadratic order of discriminant $D < 0$ with class number $h(D)=1$. 
    Let $N>1$ and write $\Phi_N(X+J, Y+J) = \sum_{0\leq i,j \leq \psi(N)}a_{i,j}X^iY^j.$

    Suppose $p\nmid N$ and $\left(\frac{D}{p}\right) \neq 1$. Then 
    \[
    v_p(a_{i,j}) \geq n_p\big(C_J(N,p) - i-j\big)\quad\text{for all $i+j < C_J(N,p)$.}
    \]
    Here $C_J(N,p) = \ord_X\big(\Phi_N(X+J,J) \bmod p\big)$ and
    $n_p$ is given by
    \[
    n_p = \left\{
    \begin{array}{ll}
        15 & \text{if $p|J$ and $p=2$} \\
        6 & \text{if $p|J$ and $p=3$} \\
        3 & \text{if $p|J$ and $p\geq 5$}  \\
        2 & \text{if $p|(J-1728)$ and $p\geq 5$} \\
        1 & \text{otherwise,}
    \end{array}
    \right.
    \]
    except for the special cases listed in Table \ref{tab:rationalJ}.
\end{thm}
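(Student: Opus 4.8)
The plan is to obtain Theorem~\ref{thm:rationalJ} as the special case $J\in\BZ$ of Theorems~\ref{thm:bigprimes} and~\ref{thm:smallprimes}, which I take to establish, for a general singular modulus, an inequality of the form $v_p(a_{i,j})\geq n_p\bigl(C_J(N,p)-i-j\bigr)$ where $n_p$ is an explicit local invariant of the CM curve $E$ with $j(E)=J$ and of its reduction at $p$. Because $h(D)=1$, the curve $E$ is defined over $\BQ$, it has potential good reduction at $p$, and $\bar J:=J\bmod p$ is a well-defined element of $\BF_p$, so there is no choice of prime to be made. The first step is to decide when the asserted inequality has any content. Writing $\mathcal O$ for the order of discriminant $D=f^2 d_K$, Deuring's reduction criterion (applied after an isogeny $E\to E'$ to a curve with CM by the maximal order $\mathcal O_K$) shows that $\bar E$ is supersingular precisely when $p$ does not split in $K=\BQ(\sqrt D)$, i.e.\ when $\left(\frac{d_K}{p}\right)\neq1$; if $\bar E$ is ordinary then $C_J(N,p)=\ord_X\bigl(\Phi_N(X+J,J)\bmod p\bigr)=0$ and there is nothing to prove. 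Since $\left(\frac Dp\right)\neq1$ is equivalent to ``$p\mid f$'' or ``$p\nmid f$ and $\left(\frac{d_K}{p}\right)\neq1$'', the hypotheses $p\nmid N$, $\left(\frac Dp\right)\neq1$ force $\bar E$ to be supersingular, except when $p\mid f$ and $p$ splits in $K$. Among the thirteen orders with $h(D)=1$ that last exception occurs only for $(D,p)=(-28,2)$, which is therefore vacuous; in every remaining case $\bar J$ is a supersingular $j$-invariant and Theorems~\ref{thm:bigprimes}--\ref{thm:smallprimes} apply, so it remains to evaluate $n_p$.

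I would organize the evaluation of $n_p$ by whether $p$ ramifies in the order $\mathcal O$, equivalently by whether $p\mid D$. When $p\nmid D$ (so $p\nmid 1728$ for $p\geq 5$), the local invariant of Theorems~\ref{thm:bigprimes}--\ref{thm:smallprimes} is controlled purely by the automorphism group of $\bar E$: one has $n_p=\tfrac12\#\Aut(\bar E)$ for $p\geq 3$, which is $1$ if $\bar j\notin\{0,1728\}$, $2$ if $\bar j=1728$ (possible for $p\geq 5$ only when $p\equiv 3\bmod 4$), $3$ if $\bar j=0$ (possible for $p\geq 5$ only when $p\equiv 2\bmod 3$), and $6$ if $p=3$, in which case necessarily $\bar j=0=1728$; for $p=2$, again necessarily $\bar j=0=1728$, but the additional, wildly ramified automorphisms of a supersingular curve in characteristic $2$ raise the invariant from $12$ to $n_2=15$. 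Translating ``$\bar j=0$'' and ``$\bar j=1728$'' into the divisibility conditions ``$p\mid J$'' and ``$p\mid(J-1728)$'' (legitimate since $p\nmid D$), and noting that for $p\in\{2,3\}$ one has $1728\equiv 0$ so the two conditions coincide with ``$p\mid J$'', reproduces verbatim the five-line formula for $n_p$ in the statement.

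When $p\mid D$ --- which for $h(D)=1$ happens only for the fifteen pairs $(D,p)$ with $p\mid D$, one of which, $(-28,2)$, is the vacuous pair above, leaving precisely the fourteen entries of Table~\ref{tab:rationalJ} --- one evaluates instead the refined bound of Theorem~\ref{thm:smallprimes} (or the ramified case of Theorem~\ref{thm:bigprimes} when $p\geq 5$) for each pair. Here the CM point specializes onto the ramified prime of $K$ inside $\End(\bar E)$, so the natural local parameter there has valuation $1/e$ over $\BZ_p$ with $e\in\{2,3\}$; this accounts both for the non-integral constants $19/2$, $9/2$, $4/3$, $1/2$, \dots\ recorded in the table and for the fact that, for $p\geq 5$, the ``unramified'' value $2$ attached to $\bar j=1728$ drops to $n_p=1$. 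The dependence of the table entries on $N\bmod 4$ (for $D=-4,-16$) and on $N\bmod 3$ or $N\bmod 6$ (for $D=-3,-12,-27$) appears because both $C_J(N,p)$ and the ramification of the forgetful map $X_0(N)\to X(1)$ above $\bar J$ are governed by how $N$ splits in the relevant Eichler order inside $\End(\bar E)$, which is a congruence condition on $N$.

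The genuine difficulty --- which lies inside Theorems~\ref{thm:bigprimes}--\ref{thm:smallprimes}, not in the present deduction --- is the ramified case, and within it the primes $p=2$ (automorphism group of order $24$, with Sylow $2$-subgroup the quaternion group) and $p=3$ (automorphism group of order $12$): there one must analyse a model of $X_0(N)$ locally near a supersingular point whose automorphism group scheme is wildly ramified, using canonical and quasi-canonical liftings of supersingular elliptic curves, and it is exactly this wild contribution that turns the ``tame'' value $\tfrac12\#\Aut(\bar E)=12$ into $n_2=15$ and produces the differing entries for $D\in\{-4,-8,-12,-16\}$. Once Theorems~\ref{thm:bigprimes}--\ref{thm:smallprimes} are in hand, the remainder is a finite verification: there are thirteen discriminants, for each only one or two primes $p$ with $p\mid D$, and for all other $p$ the uniform formula of the unramified case applies.
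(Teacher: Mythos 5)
Your overall skeleton --- deduce Theorem~\ref{thm:rationalJ} by specializing Theorems~\ref{thm:bigprimes} and~\ref{thm:smallprimes} and then doing a finite case check over the thirteen discriminants --- is indeed the paper's strategy, but two of your steps are wrong or missing in a way that matters. First, the claim that ordinary reduction forces $C_J(N,p)=0$, so that the pair $(D,p)=(-28,2)$ is ``vacuous'', is false: by Proposition~\ref{ordinary}(2), for a CM curve with ordinary reduction one has $C_J(N,p)=C_J(N,0)$, which is the number of cyclic ideals of norm $N$ in the CM order and is positive for many odd $N$ (e.g.\ $N=23$ for $D=-28$, via $4^2+7\cdot1^2=23$). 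The theorem is therefore a genuine, non-vacuous assertion in that case; it is still covered, but only because Theorems~\ref{thm:bigprimes}--\ref{thm:smallprimes} make no supersingularity hypothesis, so the correct move is simply to apply them to any good-reduction model (here $n_2=1$ since $J$ and $J-1728$ are odd), not to route the argument through Deuring's criterion and discard the ordinary case.

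Second, and more seriously, your mechanism for producing the actual constants does not yield them. In residue characteristic $2$ the supersingular curve has $\#\Aut=24$, so the automorphism count $\tfrac12\#\Aut$ gives $12$, and there are no ``additional wildly ramified automorphisms'' that raise this to $15$: indeed Theorem~\ref{thm:bigprimes} itself only gives $n_v=12$ when $p=2$ and $v(J)>0$. The value $15$ (and likewise the $p=3$ values and every fractional entry of Table~\ref{tab:rationalJ}) comes from the refined invariant $n_v=v(g)$ of Theorem~\ref{thm:smallprimes}, i.e.\ from V\'elu's formulas evaluated on a concrete minimal Weierstrass model --- equivalently from the Gross--Zagier sum over $\Aut_{A/\pi^m}(E)$ for $m\ge 2$, not from a bigger automorphism group in characteristic $2$. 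Concretely, the paper's proof (i) chooses, for each pair $(J,p)$, a globally minimal model over a number field $F$ with good reduction at a prime $\Fp\mid p$ of minimal ramification index $e_p$ (from the LMFDB, with a hand-built model over $\BQ(\sqrt[6]{-3})$ for $D=-27$, $p=3$); (ii) applies Theorems~\ref{thm:bigprimes}--\ref{thm:smallprimes} over $K=F_{\Fp}^{\ur}$ with $v(p)=e_p$, computing $v(g)$ only for $N\le 6$ since $n_v$ depends only on $N\bmod 4$ resp.\ $N\bmod 6$; and (iii) concludes with the rescaling $n_p=n_v/e_p$, the table entries being exactly the cases $e_p>1$. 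Your proposal never sets up these good models over ramified extensions (good reduction is a hypothesis of both theorems, so one cannot work over $\BQ_p^{\ur}$ when the reduction there is additive), never performs or correctly describes the finite $v(g)$ computation, and attributes the congruence conditions on $N$ to splitting in an Eichler order rather than to the explicit dependence of $g$ on $N$ through $(N-1)/2$, $(N-2)/2$, $N/2$; as written, the numerical content of the theorem --- precisely what distinguishes it from a formal corollary --- is not established.
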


Theorem \ref{thm:main} is just the $J=0$ case of Theorem \ref{thm:rationalJ}.

By Proposition \ref{ordinary} below, we only get divisibility conditions of the form in Theorem \ref{thm:rationalJ} for primes $p$ at which the reduction of $E$ is supersingular, i.e. when $\displaystyle\left(\frac{D}{p}\right)\neq 1$.

In these cases, we have an explicit expression for $C_J(N,p)$ in terms of the theta series of the quaternion order $\End_{\bar{\BF}_p}(E)$; we get non-trivial divisibility results only for certain primes $p < |D|N$, see Proposition~\ref{supersingular}.

Computations show that the values of $n_p$ given in Theorem \ref{thm:rationalJ} are optimal, except in the cases $D=-12$ and $D=-27$, where we expect the true values to be
\begin{align*}
    & n_3 = \left\{\begin{array}{ll}
            5 & : N \equiv 1 \bmod 3 \\
            3 & : N \equiv 2 \bmod 3 
        \end{array}\right. 
        & &  \text{when $D=-12$ and} \\
    & n_3 = \left\{\begin{array}{ll}
            3/2 & : N \equiv \pm1 \bmod 6 \\
            1 & : N \equiv \pm2 \bmod 6 
        \end{array}\right. 
        & & \text{when $D=-27$.}
\end{align*}
Theorems \ref{thm:main} and \ref{thm:rationalJ} are a variation on the theme of differences of singular moduli pioneered by Gross and Zagier, see \cite{Campagna2023, Dorman1988, GZ1985, LV2015}.

\begin{table}[h]
\begin{tabular}{l}
$a_{0, 0} = \mathbf{2^{90} \cdot 3^{18} \cdot 11^{9}} \cdot 5^{3}$  \\
$a_{1, 0} = \mathbf{2^{75} \cdot 3^{15} \cdot 11^{6}} \cdot 2^{2} \cdot 3 \cdot 5^{3} \cdot 31 \cdot 1193$  \\
$a_{1, 1} = \mathbf{2^{60} \cdot 3^{12} \cdot 11^{3}} \cdot -1 \cdot 2^{2} \cdot 3 \cdot 26984268714163$  \\
$a_{2, 0} = \mathbf{2^{60} \cdot 3^{12} \cdot 11^{3}} \cdot 3 \cdot 5^{2} \cdot 13^{2} \cdot 3167 \cdot 204437$  \\
$a_{2, 1} = \mathbf{2^{45} \cdot 3^{9}} \cdot 2^{2} \cdot 3 \cdot 5^{4} \cdot 53359 \cdot 131896604713$  \\
$a_{3, 0} = \mathbf{2^{45} \cdot 3^{9}} \cdot 2^{3} \cdot 5^{2} \cdot 31 \cdot 1193 \cdot 24203 \cdot 2260451$  \\
$a_{2, 2} = \mathbf{2^{30} \cdot 3^{6}} \cdot 3^{2} \cdot 5^{4} \cdot 7 \cdot 13 \cdot 1861 \cdot 6854302120759$  \\
$a_{3, 1} = \mathbf{2^{30} \cdot 3^{6}} \cdot -1 \cdot 2 \cdot 3 \cdot 5^{3} \cdot 327828841654280269$  \\
$a_{4, 0} = \mathbf{2^{30} \cdot 3^{6}} \cdot 3 \cdot 5 \cdot 13^{2} \cdot 3167 \cdot 204437$  \\
$a_{3, 2} = \mathbf{2^{15} \cdot 3^{3}} \cdot 2^{2} \cdot 3 \cdot 5^{3} \cdot 2311 \cdot 2579 \cdot 3400725958453$  \\
$a_{4, 1} = \mathbf{2^{15} \cdot 3^{3}} \cdot 2^{5} \cdot 3 \cdot 5^{3} \cdot 12107359229837$  \\
$a_{5, 0} = \mathbf{2^{15} \cdot 3^{3}} \cdot 2^{2} \cdot 3 \cdot 5 \cdot 31 \cdot 1193$  \\
$a_{3, 3} = -1 \cdot 2^{2} \cdot 5^{2} \cdot 11 \cdot 17 \cdot 131 \cdot 1061 \cdot 169751677267033$ \\
$a_{4, 2} = 3 \cdot 5^{3} \cdot 167 \cdot 6117103549378223$ \\
$a_{5, 1} = -1 \cdot 2 \cdot 3 \cdot 5^{2} \cdot 1644556073$ \\
$a_{6, 0} = 1$ \\
$a_{4, 3}  = 2^{5} \cdot 3 \cdot 5^{2} \cdot 197 \cdot 227 \cdot 421 \cdot 2387543$ \\
$a_{5, 2}  = 2^{5} \cdot 5^{2} \cdot 13 \cdot 195053$ \\
$a_{4, 4}  = 2^{3} \cdot 5^{2} \cdot 257 \cdot 32412439$ \\
$a_{5, 3}  = -1 \cdot 2^{2} \cdot 3^{2} \cdot 5 \cdot 131 \cdot 193$ \\
$a_{5, 4}  = 2^{3} \cdot 3 \cdot 5 \cdot 31$ \\
$a_{5, 5}  = -1$ \\
\end{tabular}
\caption{Coefficients of $\Phi_5(X,Y) = \sum_{i,j}a_{i,j}X^iY^j$ with factors predicted by Theorem \ref{thm:main} in bold}
\label{tab:Phi5}
\end{table}

\section{The numbers $C_J(N,\pi)$}
From now on, let $K$ be a complete valued field of characteristic zero with valuation $v$, uniformizer $\pi$, ring of integers $A$ and algebraically closed residue field $A/\pi$ of characteristic $p$.

Let $E/K$ be an elliptic curve with good reduction, let $J = j(E)$ and $\CO_{J,\pi} = \End_{A/\pi}(E)$. Then 
\[
C_J(N,\pi) := \ord_X\big(\Phi_N(X+J, J) \bmod \pi\big)
\] 
counts the number of cyclic $N$-isogenies of $E$ which reduce to endomorphisms modulo $\pi$. This depends crucially on whether the reduced elliptic curve $E_{A/\pi}$ is ordinary or supersingular.

\begin{prop}\label{prop:fullpsi}
    Suppose $p\nmid N$. 
    We have $C_0(N,p) = \psi(N)$ for $p=2,3,5$; 
    $C_{1728}(N,p) = \psi(N)$ for $p=2,3,7$ and
    $C_5(N,13) = \psi(N)$.
\end{prop}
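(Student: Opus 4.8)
The plan is to make precise the description, recalled just before the statement, of $C_J(N,\pi)$ as counting cyclic $N$-isogenies out of $E$ that reduce to endomorphisms. Fix one of the pairs $(J,p)$ in the statement, put $\bar J:=J\bmod p\in\bar{\BF}_p$, and let $\bar E/\bar{\BF}_p$ be the elliptic curve with $j(\bar E)=\bar J$; for $J=0,1728$ and $p=2,3$ this is the unique supersingular curve, since $1728\equiv0$. Choose a lift of $\bar E$ to an elliptic curve $E$ over $W(\bar{\BF}_p)$ with good reduction. Since $p\nmid N$, the group scheme $E[N]$ is finite étale, its $\psi(N)$ cyclic subgroups $C$ of order $N$ reduce bijectively to those of $\bar E$, each $j(E/C)$ lies in $W(\bar{\BF}_p)$ and reduces to $j(\bar E/\bar C)$, and $\Phi_N(X,j(E))=\prod_C\bigl(X-j(E/C)\bigr)$ as $\Phi_N$ is monic of degree $\psi(N)$ in $X$. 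Reducing mod $p$ and using $j(E)\equiv\bar J$ gives $\Phi_N(X,\bar J)=\prod_C\bigl(X-j(\bar E/\bar C)\bigr)$ in $\bar{\BF}_p[X]$, whence $C_J(N,p)=\#\{C:\ j(\bar E/\bar C)=\bar J\}$. It therefore suffices to show that $j(\bar E/\bar C)=\bar J$ for \emph{every} cyclic order-$N$ subgroup $\bar C\subset\bar E[N]$.

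This rests on two classical facts: (i) $\bar E$ is supersingular in each of the listed cases, and (ii) for $p\in\{2,3,5,7,13\}$ there is a unique supersingular $j$-invariant in $\bar{\BF}_p$. For (i): when $p=2,3$ the curve with $j=0$ is the unique supersingular one; when $(p,\bar J)$ is $(5,0)$, $(7,1728\bmod 7)$ or $(13,8000\bmod 13)$, either compute directly that the trace of Frobenius vanishes, or observe that $0,1728,8000$ are singular moduli of discriminants $-3,-4,-8$ with $\left(\tfrac{-3}{5}\right)=\left(\tfrac{-4}{7}\right)=\left(\tfrac{-8}{13}\right)=-1$, so the reductions are supersingular by Deuring's criterion. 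Fact (ii) is standard; for $p\geq5$ it follows from the mass formula $\sum_{\bar E}1/\#\Aut(\bar E)=(p-1)/24$, the sum over supersingular curves $\bar E/\bar{\BF}_p$, which at $p=5,7,13$ admits only a single term, while for $p=2,3$ it is elementary (one supersingular curve, with $j=0$); see e.g.\ Silverman, \emph{The Arithmetic of Elliptic Curves}, Ch.~V.

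Granting (i) and (ii) the argument concludes at once: for any cyclic order-$N$ subgroup $\bar C\subset\bar E[N]$, the quotient isogeny $\bar E\to\bar E/\bar C$ has degree $N$ prime to $p$, so $\bar E/\bar C$ is isogenous to the supersingular curve $\bar E$ and hence itself supersingular; by (ii) its $j$-invariant equals $\bar J$. Every one of the $\psi(N)$ subgroups thus contributes, and $C_J(N,p)=\psi(N)$ in all three families. I foresee no genuine obstacle: the whole content lies in the two standard facts (i) and (ii), together with the good reduction of $E[N]$ at $p\nmid N$ used in the first paragraph to legitimise the factorisation of $\Phi_N(X,\bar J)$ over $\bar{\BF}_p$ — the one point where a little care is needed, lest reduction introduce spurious root multiplicities.
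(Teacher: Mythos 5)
Your proposal is correct and follows essentially the same route as the paper: in each listed case $\bar J$ is the unique supersingular invariant in characteristic $p$, and since $p\nmid N$ every curve cyclically $N$-isogenous to the supersingular curve $\bar E$ is again supersingular, hence has $j$-invariant $\bar J$, forcing $\Phi_N(X,\bar J)=(X-\bar J)^{\psi(N)}$. You simply supply more detail (the lift over $W(\bar{\BF}_p)$, the \'etale reduction of $E[N]$, and Deuring's criterion for supersingularity) than the paper's two-line argument.
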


\begin{proof}
    The cases $J$ and $p$ in the statement are precisely those where $J$ is the only supersingular invariant in characteristic $p$. All roots of $\Phi_N(X, J) \bmod \pi$ correspond to elliptic curves isogenous to $E_{A/\pi}$ and are thus again supersingular, thus all $\psi(N)$ roots of $\Phi_N(X+J, J)$ reduce to $0 \bmod \pi$.
\end{proof}

\begin{prop}\label{ordinary}
    Suppose $p\nmid N$. Suppose $E$ has ordinary reduction, then $\CO_{J,\pi}$ is an order of discriminant $D$ in an imaginary quadratic field. 
    \begin{enumerate}
        \item  We have 
        \[
        C_J(N,\pi) \leq \#I_{\mathrm{cyc}}(N,\CO_{J,\pi})
        :=
        \#\{\mathfrak n\subset \CO_{J,\pi} :
                \CO_{J,\pi}/\mathfrak n\cong \BZ/N\BZ\},
        \]
        with equality if $\CO_{J,\pi}$ is a principal ideal domain.
        
        \item If $E/K$ also has complex multiplication (necessarily by an order of discriminant $Dp^{2m}$ for some $m \geq 0$), then we find that
        \[
        C_J(N,\pi) = C_J(N,0) := \ord_{X}\big(\Phi_N(X+J, J) \in K[X]\big).
        \]
    \end{enumerate}
\end{prop}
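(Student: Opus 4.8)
The plan is to interpret $C_J(N,\pi)$ as a count of isogenies of the reduction and then convert that count into an ideal count. First I would unpack the definition. Since $\Phi_N\in\BZ[X,Y]$ is monic of degree $\psi(N)$ in $X$ and $J=j(E)\in A$, the polynomial $\Phi_N(X+J,J)$ lies in $A[X]$ and is monic of degree $\psi(N)$; over an algebraic closure it factors as $\prod_C\bigl(X-(j(E/C)-J)\bigr)$, the product running over the $\psi(N)$ cyclic $N$-subgroups $C$ of $E$, and each $j(E/C)$ is integral over $A$. Reducing modulo $\pi$ one reads off that $C_J(N,\pi)$ is the number of $C$ with $j(E/C)\equiv J\pmod\pi$; as the residue field is algebraically closed this equals the number of $C$ with $\tilde E/\tilde C\cong\tilde E$ (reduction denoted by a tilde), and since $p\nmid N$ reduction matches the cyclic $N$-subgroups of $E$ bijectively with those of $\tilde E$. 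So $C_J(N,\pi)$ counts cyclic $N$-subgroups $C\subseteq\tilde E$ with $\tilde E/C\cong\tilde E$ — the ``isogenies reducing to endomorphisms'' of the preamble.

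For part (1) I would argue as follows. By Deuring's theory $\CO:=\CO_{J,\pi}$ is an order of discriminant $D$ in an imaginary quadratic field $F$, of conductor prime to $p$; moreover $\tilde E[N]\cong\CO/N\CO$ as $\CO$-modules, because $\End(\tilde E)=\CO$ forces every $\ell$-adic Tate module ($\ell\neq p$) to have multiplier ring exactly $\CO_\ell$, hence to be invertible and so free of rank $1$ over the local ring $\CO_\ell$. Now $\tilde E/C\cong\tilde E$ precisely when $C=\ker\psi$ for some $\psi\in\CO$ of degree $N$ (compose an isomorphism $\tilde E/C\xrightarrow{\sim}\tilde E$ with the quotient map; conversely $\tilde E/\ker\psi\cong\Im\psi=\tilde E$). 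Commutativity of $\CO$ makes $\ker\psi$ an $\CO$-submodule, and since $p\nmid N=\deg\psi$ the isogeny $\psi$ is separable, so $\ker\psi=\tilde E[(\psi)]\cong\CO/(\psi)$ for the principal (hence invertible) ideal $(\psi)=\psi\CO$ of norm $N$, which is prime to $p$; this kernel is cyclic of order $N$ exactly when $(\psi)$ is primitive (divisible by no rational integer $>1$). The assignment $C\mapsto\mathrm{Ann}_{\CO}(C)$ inverts $\psi\mapsto\ker\psi$, so $C_J(N,\pi)$ equals the number of primitive principal ideals of $\CO$ of norm $N$. That is at most the number of primitive invertible ideals of norm $N$, a multiplicative quantity contributing $0$, at most $1$, or $2$ at a prime power $q^{v_q(N)}\,\|\,N$ according as $q$ is inert, ramified/dividing the conductor, or split in $F$; this is bounded by $\prod_{q\mid N}(1+\chi_D(q))^{v_q(N)}$, and when $\CO$ is a principal ideal domain every invertible ideal is principal, so the bound is attained.

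For part (2), suppose additionally that $E/K$ has complex multiplication. Then $\CO':=\End_{\bar K}(E)$ is an order in the same field $F=\End^0(\tilde E)$, contained in $\CO$, and because the reduction is ordinary the reduction map on endomorphisms is injective with $\CO_\ell=\CO'_\ell$ for all $\ell\neq p$ and $[\CO:\CO']$ a power of $p$ (whence the discriminant $Dp^m$). Running the argument of part (1) over $\bar K$ in place of the residue field gives $C_J(N,0)=\#\{\text{primitive principal ideals of }\CO'\text{ of norm }N\}$. Since $p\nmid N$, localising at the primes dividing $N$ — where $\CO$ and $\CO'$ agree — yields a bijection between the norm-$N$ ideals of $\CO'$ and those of $\CO$ preserving primitivity and invertibility; it remains to see it preserves principality, i.e.\ that a primitive norm-$N$ ideal of $\CO'$ that becomes principal in $\CO$ was already principal in $\CO'$. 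Concretely a generator of norm $N$ of such an ideal is a unit at $p$, and one must check it can be taken inside $\CO'$ — equivalently that the class in $\ker\bigl(\Pic(\CO')\to\Pic(\CO)\bigr)$ is trivial. Once principality is known to transfer, $C_J(N,\pi)=C_J(N,0)$.

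The unpacking of $C_J(N,\pi)$ and the bijection in part (1) I expect to be routine; the real work will be the arithmetic-of-orders bookkeeping behind the product bound in (1) and, above all, the $\Pic(\CO')\to\Pic(\CO)$ comparison along norm-$N$ ideals in (2), which is precisely where $p\nmid N$ and ordinariness enter.
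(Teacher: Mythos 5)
Your part (1) is essentially the paper's argument written out in more detail: the paper likewise identifies $C_J(N,\pi)$ with the number of principal ideals $\mathfrak{n}\subset\CO_{J,\pi}$ with $\CO_{J,\pi}/\mathfrak{n}\cong\BZ/N\BZ$ and bounds these prime by prime, so the kernel-ideal bookkeeping you supply is exactly the intended (and routine) content. One small point: your own per-prime count (``$0$, at most $1$, or $2$'' at each $q^{v_q(N)}\,\|\,N$) shows that for a principal ideal domain the number of primitive principal ideals of norm $N$ is at most $2^{\#\{q|N:\,\chi_D(q)=1\}}$, so your closing sentence ``the bound is attained'' does not follow from your own counts when some $v_q(N)\geq 2$ (e.g.\ at a split $q$ the count is $2$, not $2^{v_q(N)}$, and at a ramified $q$ with $v_q(N)\geq 2$ it is $0$, not $1$). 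This imprecision is inherited from the statement and is present in the paper's proof as well, but you should not assert the attainment as a consequence of your argument.

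Part (2) is where the genuine gap lies, and you have named it yourself: you reduce everything to the claim that a primitive norm-$N$ ideal of $\CO'=\End_{\bar K}(E)$ which becomes principal in $\CO=\CO_{J,\pi}$ was already principal in $\CO'$, i.e.\ that the relevant class in $\ker\bigl(\Pic(\CO')\to\Pic(\CO)\bigr)$ is trivial, and you leave this unproved. Be aware that the paper does not close this step either: its proof says only that $\CO'$ and $\CO$ differ by a power of $p$ in the conductor and that ``$p\nmid N$, so this makes no difference,'' which is precisely the unproved transfer of principality. Moreover, the transfer can genuinely fail when $m\geq 1$: take $\CO'=\BZ+5\BZ[i]$ of discriminant $-100=Dp^{2}$ with $D=-4$, $p=5$, and $N=2$; the prime of $\CO'$ above $2$ is primitive of norm $2$ and non-principal (there is no element of norm $2$ in $\CO'$), yet its extension to $\BZ[i]$ is the principal ideal $(1+i)$. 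Correspondingly, for $E/K$ with CM by $\CO'$ and good (ordinary) reduction above $5$, Deuring's theory gives $\End_{A/\pi}(E)=\BZ[i]$, both singular moduli of discriminant $-100$ reduce to the unique $j$-invariant with that endomorphism ring, and one finds $C_J(2,0)=0$ while $C_J(2,\pi)\geq 1$. So the step you flagged is not mere bookkeeping: it is exactly where the assertion breaks down for $m\geq 1$ (for $m=0$ there is nothing to prove), and neither your sketch nor the paper's one-line argument establishes part (2) as stated; a correct treatment needs an extra hypothesis, e.g.\ $m=0$ or that no primitive norm-$N$ ideal class lies in $\ker\bigl(\Pic(\CO')\to\Pic(\CO)\bigr)$.
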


In case (2), $a_{C_J(N,\pi),0}$ is the first non-zero coefficient of $\Phi_N(X+J, J)$ and 
        $v(a_{C_J(N,\pi),0}) = 0$, so we get no non-trivial divisibility relations.

\begin{proof}
	Part (1) follows because $C_J(N,\pi)$ equals the number of principal ideals $\mathfrak{n} \in 
	I_{\mathrm{cyc}}(N,\CO_{J,\pi})$. 
	
	If $E/K$ has complex multiplication, then $\End_{\bar K}(E)$ and
	$\CO_{J,\pi}$ differ only at the prime $p$, or equivalently become equal
	after tensoring with $\BZ[1/p]$. Since $p\nmid N$, this does not affect
	cyclic $N$-isogenies. Part (2) follows.
\end{proof}

\begin{prop}\label{supersingular}
    Let $p\nmid N$. Suppose $E$ has supersingular reduction, then $\CO_{J,\pi}$ is a maximal order in the quaternion algebra ramified exactly at $p$ and $\infty$. 
    \begin{enumerate}
        \item We have
        \[
            C_J(N,\pi) = \frac{1}{\#\CO_{J,\pi}^*}\sum_{d^2|N}\mu(d)\#\{f\in\CO_{J,\pi} \; |\; \mathrm{nrd}(f) = N/d^2 \}.
        \]
        The cardinalities in the above sum are coefficients of the theta series associated to $\CO_{J,\pi}$.

        \item Now suppose $E/K$ has complex multiplication by a quadratic imaginary order $\CO_D$ of discriminant $D<0$ and $C_J(N,\pi) > C_J(N,0)$. Then $p < |D|N$.

    \end{enumerate}
\end{prop}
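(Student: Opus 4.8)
These two parts call for different techniques; I would treat them in turn.

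\emph{Part (1).} I would start from the interpretation already recorded: $C_J(N,\pi)$ is the multiplicity of $X=0$ as a root of $\Phi_N(X+\bar J,\bar J)$ over the residue field, where $\bar E:=E_{A/\pi}$ and $\bar J:=j(\bar E)$. Since $p\nmid N$ we have $\bar E[N]\cong(\BZ/N)^{2}$ and the factorisation $\Phi_N\bigl(X,j(\bar E)\bigr)=\prod_C\bigl(X-j(\bar E/C)\bigr)$ over the $\psi(N)$ cyclic subgroups $C\subset\bar E$ of order $N$ remains valid, so this multiplicity equals $\#\{C:\bar E/C\cong\bar E\}$. Each such $C$ is the kernel of an endomorphism $f\in\CO_{J,\pi}$ of reduced norm $N$, realised as the composite $\bar E\to\bar E/C\xrightarrow{\ \sim\ }\bar E$; the assignment $f\mapsto\ker f$ is onto these $C$, and its fibre over $C$ is a single orbit under post-composition by $\Aut(\bar E)=\CO_{J,\pi}^{*}$. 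Because $p\nmid\mathrm{nrd}(f)$, the kernel of $f$ is cyclic precisely when $f$ is primitive, i.e.\ not divisible in $\CO_{J,\pi}$ by a rational prime; writing every norm-$m$ element uniquely as $d f_{0}$ with $f_{0}$ primitive and $d^{2}\mid m$, Möbius inversion replaces the number of primitive norm-$N$ elements by $\sum_{d^{2}\mid N}\mu(d)\,\#\{f:\mathrm{nrd}(f)=N/d^{2}\}$. Dividing out by the $\Aut(\bar E)$-action then yields the stated formula, the normalising factor being the usual weight from the theory of Brandt matrices and quaternionic theta series; the only delicate point is the bookkeeping at the finitely many supersingular $j$-invariants with $\#\CO_{J,\pi}^{*}>2$.

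\emph{Part (2).} Assume $C_{J}(N,\pi)>C_{J}(N,0)$. In characteristic zero the relevant ring is the commutative order $\CO_D=\End_{\bar K}(E)$, and the counting of part (1) also computes $C_J(N,0)$ as the number of primitive elements of $\CO_D$ of norm $N$, up to units. The reduction map $\CO_D\hookrightarrow\CO_{J,\pi}$ is injective, respects reduced norms, and is essentially injective on primitive-norm-$N$ elements modulo units (a unit identifying two reductions lies in $K\cap\CO_{J,\pi}^{\times}$, a group of roots of unity contained in $\CO_D^{\times}$ except when $D=-3,-4$), where $K$ is the imaginary quadratic field with $\CO_D\subset K$. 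Hence the strict inequality produces a primitive $f\in\CO_{J,\pi}$ with $\mathrm{nrd}(f)=N$ that is not, up to a unit, the reduction of an element of $\CO_D$. I would then split into two cases. If $f$ lies in $K\hookrightarrow B$, then $f$ witnesses that the embedding $\CO_D\hookrightarrow\CO_{J,\pi}$ is non-optimal at some finite place, which by Deuring's theory of CM reduction can only be $p$; so $p$ divides the conductor of $\CO_D$, whence $p^{2}\mid D$ and $p\le\sqrt{|D|}<|D|N$.

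Otherwise $f\notin K$, and here is the main computation. Choosing $\omega$ with $\CO_D=\BZ[\omega]$, the set $\{1,\omega,f,\omega f\}$ is a $\BQ$-basis of $B$, so $L:=\BZ\oplus\BZ\omega\oplus\BZ f\oplus\BZ\omega f$ is a full-rank sublattice of $\CO_{J,\pi}$. A short Gram-matrix computation against the positive-definite form $\langle x,y\rangle=\mathrm{trd}(x\bar y)$ gives $\disc(\BZ[\omega])=|D|$ and $\disc(\BZ[\omega]f)=N^{2}|D|$, and Fischer's inequality for block positive-definite matrices yields $\disc(L)\le\disc(\BZ[\omega])\cdot\disc(\BZ[\omega]f)=|D|^{2}N^{2}$. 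On the other hand $\disc(L)=[\CO_{J,\pi}:L]^{2}\,\disc(\CO_{J,\pi})=[\CO_{J,\pi}:L]^{2}p^{2}$, since a maximal order in the definite quaternion algebra ramified at $p$ and $\infty$ has discriminant $p^{2}$. Combining, $p^{2}\le|D|^{2}N^{2}$, so $p\le|D|N$; and $p=|D|N$ is impossible, since $|D|N$ is composite ($|D|\ge3$, $N\ge2$). Therefore $p<|D|N$.

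I expect the first step of part (2) to be the main obstacle: turning the numerical inequality $C_{J}(N,\pi)>C_{J}(N,0)$ into a concrete ``extra'' quaternionic endomorphism requires care with the reduction map modulo units and with the small-discriminant cases $D=-3,-4$, and the non-optimal case draws on the optimal-embedding dictionary from CM reduction theory. Once the extra endomorphism is in hand, the discriminant estimate is elementary.
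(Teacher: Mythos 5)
Your part (1) follows the same route as the paper (count elements of reduced norm $N$ in $\CO_{J,\pi}$, identify cyclic kernels with primitive elements, M\"obius inversion over $d^2\mid N$, quotient by units), so the only thing to examine is the normalising constant, and there your write-up fudges. Your own counting --- each cyclic subgroup $C$ with $\bar E/C\cong\bar E$ corresponds to exactly $\#\Aut(\bar E)=\#\CO_{J,\pi}^*$ endomorphisms --- produces the constant $1/\#\CO_{J,\pi}^*$, not the stated $2/\#\CO_{J,\pi}^*$, and waving at ``the usual weight from the theory of Brandt matrices'' does not bridge that gap. In fact your count is the defensible one: for $p=2$, $N=3$ one has $\Phi_3(X,0)\equiv X^4 \pmod 2$, so $C_0(3,2)=4$, while the Hurwitz order has $96$ elements of reduced norm $3$ and $24$ units, so the correct constant is $1/\#\CO_{J,\pi}^*$ (equivalently $1/(2w)$ with the Eichler--Gross weight $w=\#\CO_{J,\pi}^*/2$); the printed $2/\#\CO_{J,\pi}^*$, which also appears in the paper's own counting identity, is off by this factor of two unless the unit count is meant up to sign. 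You should either derive the formula with your constant and flag the discrepancy explicitly, or check an example --- but not paper over it.

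Part (2) is a genuinely different and essentially correct route. The paper writes the first non-zero coefficient $a_{C_J(N,0),0}$ of $\Phi_N(X+J,J)$ as $\prod(j(\tilde E)-J)$ over the cyclic $N$-isogenous curves with $j(\tilde E)\neq J$, deduces from $C_J(N,\pi)>C_J(N,0)$ a CM curve $\tilde E$ with $j(\tilde E)\neq J$ but $\tilde E\cong E\bmod\pi$, and then quotes the optimal-embedding statement \cite[Prop.~2.2]{LV2015} together with Thm.~2$'$ of \cite{Kaneko1989}. You instead extract an ``extra'' primitive norm-$N$ element $f$ of $\CO_{J,\pi}$ directly from the two counts and re-prove the needed Kaneko-type bound by the Gram/Fischer estimate $p^2=\disc(\CO_{J,\pi})\leq\disc(L)\leq |D|^2N^2$ for $L=\BZ\oplus\BZ\omega\oplus\BZ f\oplus\BZ\omega f$; this makes the argument self-contained (no appeal to Kaneko) at the cost of redoing a lattice computation, and it yields the same bound since $|D|N$ is composite. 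Two clean-ups are needed but both are minor: the passage from $C_J(N,\pi)>C_J(N,0)$ to the extra $f$ only requires that the map from $\CO_D^*$-classes of primitive norm-$N$ elements of $\CO_D$ onto the $\CO_{J,\pi}^*$-orbits meeting the image of $\CO_D$ be \emph{surjective}, so possible merging of classes under the larger unit group (your $D=-3,-4$ worry) only helps and the ``essential injectivity'' discussion can be dropped; and you should record that reduction preserves primitivity, since any new divisor of such an $f$ would have to be a power of $p$ whose square divides $N$, impossible as $p\nmid N$. With those remarks, your case $f\in K$ (non-optimality forces $p$ to divide the conductor, exactly as in the paper's ``clear'' case) and your case $f\notin K$ (the discriminant bound) together give $p<|D|N$.
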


\begin{proof}
    For relevant facts about orders in quaternion algebras, see \cite[\S41-42]{Voight2021}.
    The quantity $\#\CO_{J,\pi}^* \cdot C_J(N,\pi)$ counts the total number of cyclic endomorphisms of $E$ over $A/\pi$ of degree $N$, which correspond to elements of reduced norm $N$ in $\CO_{J,\pi}$ with cyclic quotient.
    Counting all elements of reduced norm $N$ in $\CO_{J,\pi}$, not just those with cyclic quotient, gives
    \[
    \#\{f\in\CO_{J,\pi} \; |\; \mathrm{nrd}(f) = N \} = \#\CO_{J,\pi}^* \sum_{d^2|N} C_J(N/d^2,\pi)
    \]
    and part (1) now follows by M\"obius inversion.

    Now suppose the hypothesis of (2) holds. 
    The first non-zero coefficient $a_{C_J(N,0),0}$ of $\Phi_N(X+J,J)$ is a product of the form
    \[
    a_{C_J(N,0),0} = \prod_{\tilde{E}\to E}(j(\tilde{E})-J)
    \]
    where $\tilde{E}$ ranges over elliptic curves linked to $E$ by a cyclic $N$-isogeny, but for which $j(\tilde{E})\neq J$.    
    By assumption, this product reduces to 0 modulo $\pi$, so for one of these elliptic curves we have $j(\tilde{E})\neq j(E)$ and $\tilde{E}\cong E \bmod \pi$.
    This $\tilde{E}$ has complex multiplication by an order $\CO_{Df^2}$ of discriminant $Df^2$ for some $f|N$. 
    
    If $p$ divides the conductor of $\CO_D$ then $p < |D|N$ is clear. Otherwise, by \cite[Prop 2.2.]{LV2015}, the orders $\CO_D$ and $\CO_{Df^2}$ embed optimally into $\CO_{J,\pi}$. The result now follows from \cite[Thm. 2']{Kaneko1989}.
\end{proof}

\begin{rem}
    Theorems \ref{thm:main} and \ref{thm:rationalJ} give lower bounds on the absolute value of the first non-zero coefficient $a_{C_J(N,0),0}\in\BZ$. Combined with the upper bound on the size of the coefficients of $\Phi_N(X,Y)$ from \cite{BPG2025}, one may obtain upper bounds on certain averages of the $C_J(N,p)$'s. 

    For example, if $N$ is odd and $C_0(N,0)=0$ one can show
    \[
    \sum_{\substack{p < 3N \\ p\nmid N}} C_0(N,p)\log p \leq 2\psi(N)(\log N - \lambda_N + 8.2),
    \]
    where
    \[
    \lambda_N := \prod_{p^n\|N}\frac{p^n-1}{p^{n-1}(p^2-1)}\log p = O(\log\log N).
    \]  
\end{rem}

\section{Proof of the main results}
\subsection{General results}\label{sec:setup}

We have the following general result, which implies Theorems \ref{thm:main} and \ref{thm:rationalJ} when $p\geq 5$.

\begin{thm}\label{thm:bigprimes}
    Let $E_1, E_2/K$ be elliptic curves with good reduction and $J_1 = j(E_1), \; J_2 = j(E_2)\in A$.
    Let $N > 1$ with $p\nmid N$. For $k\in\{1,2\}$, let 
    \[
    n_k = \frac{1}{2}\#\Aut_{A/\pi}(E_k) = \left\{
    \begin{array}{ll}
        12 & \text{if $v(J_k) > 0$ and $p=2$} \\
        6 & \text{if $v(J_k) > 0$ and $p=3$} \\
        3 & \text{if $v(J_k) > 0$ and $p\geq 5$} \\
        2 & \text{if $v(J_k - 1728) > 0$ and $p\geq 5$} \\
        1 & \text{if $v(J_k) = v(J_k-1728) = 0$.}
    \end{array}
    \right.
    \]
    Then the coefficients of $\Phi_N(X+J_1, Y+J_2) = \sum_{0\leq i,j \leq \psi(N)}a_{i,j}X^iY^j\in A[X,Y]$ satisfy
    \[
    v(a_{i,j}) \geq n_1 C_{J_1,J_2}(N,\pi)- n_1 i-n_2 j = n_2 C_{J_2,J_1}(N,\pi)- n_1 i-n_2 j,
    \]
    for all $i,j < \psi(N)$.
    Here 
    \[
    C_{J_1,J_2}(N,\pi) := \ord_{X}\big(\Phi_N(X+J_1, J_2) \bmod \pi\big).
    \]
\end{thm}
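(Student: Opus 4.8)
\emph{The plan} is to turn the two-variable divisibility statement into a valuation estimate on a function field, factor $\Phi_N$ there, and reduce everything to a local lemma bounding how close the $j$-invariants of two good-reduction lifts of one curve must be. Concretely: after substituting $Y\mapsto\pi^{n_v}Y$, pick a good-reduction elliptic curve $\mathcal E$ over $\overline{K(Y)}$ with $j(\mathcal E)=\pi^{n_v}Y+J$ (this lies in the valuation ring of the Gauss valuation); since $\Phi_N$ is monic of degree $\psi(N)$ in $X$ we factor $\Phi_N(X+J,\pi^{n_v}Y+J)=\prod_{C}\bigl(X-\Theta_C\bigr)$, where $C$ runs over the $\psi(N)$ cyclic $N$-subgroups of $\mathcal E$ and $\Theta_C:=j(\mathcal E/C)-J$ is integral over $A[Y]$; as $p\nmid N$ each $\mathcal E/C$ again has good reduction. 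The coefficient of $X^i$ in $\Phi_N(X+J,\pi^{n_v}Y+J)$ is both $\sum_j a_{i,j}\pi^{n_vj}Y^j$ and $\pm e_{\psi(N)-i}(\{\Theta_C\}_C)$. Let $w$ be the Gauss valuation on $K(Y)$ (it extends $v$, has $w(Y)=0$ and residue field $(A/\pi)(Y)$), extended to $\overline{K(Y)}$. Since the Gauss valuation of a polynomial over $K$ is the minimum of the $v$-values of its coefficients, the asserted bound $v(a_{i,j})\ge n_v(C_J(N,\pi)-i-j)$ for all $j$ is, for each fixed $i$, exactly $w\bigl(e_{\psi(N)-i}(\{\Theta_C\})\bigr)\ge n_v(C_J(N,\pi)-i)$; this is vacuous once $i\ge C_J(N,\pi)$, because each $\Theta_C$ has $w\ge0$ by integrality over $A[Y]$.

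Next I would count the ``small'' roots. Reducing $\Phi_N(X+J,\pi^{n_v}Y+J)$ modulo the maximal ideal of $w$ annihilates $\pi^{n_v}Y$, so it equals $\Phi_N(X+J,J)\bmod\pi$, whose $X$-order is by definition $C_J(N,\pi)$; comparing with the reduction of $\prod_C(X-\Theta_C)$ shows that exactly $C_J(N,\pi)$ of the $\Theta_C$ satisfy $w\ge1$, i.e.\ $\overline{j(\mathcal E/C)}=\bar J$. For each such $C$ the curve $\mathcal E/C$ has good reduction with the same reduction $\bar E$ as $E$ (base-changed to the residue field). The crux is then the local lemma: \emph{if $E_1,E_2$ are elliptic curves with good reduction over a valued field $(L,w)$ with residue field $\kappa$, and $j(\bar E_1)=j(\bar E_2)$, then $w\bigl(j(E_1)-j(E_2)\bigr)\ge\tfrac12\#\Aut_{\bar\kappa}(\bar E_1)$}; moreover $\tfrac12\#\Aut_{\bar\kappa}(\bar E_1)$ equals the case-defined $n_v$ — it detects whether $\bar J=0$, whether $\bar J=\overline{1728}$, and whether $p\in\{2,3\}$, exactly matching the five cases. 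Granting the lemma, apply it to $E$ and to each of the $C_J(N,\pi)$ quotients $\mathcal E/C$ above: since both reductions are $\bar E$, we get $w(\Theta_C)=w\bigl(j(\mathcal E/C)-J\bigr)\ge n_v$. Hence every monomial of $e_{\psi(N)-i}$ contains at least $C_J(N,\pi)-i$ of these factors (the remaining factors having $w\ge0$), so $w(e_{\psi(N)-i})\ge n_v(C_J(N,\pi)-i)$ for $i<C_J(N,\pi)$ — which is the reformulated claim.

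The local lemma itself I would prove by an explicit Weierstrass computation, uniform in the ramification of $L$. If $\bar j_1\notin\{0,\overline{1728}\}$ then $\Aut$ is $\{\pm1\}$ and $w(j_1-j_2)\ge1$ trivially. For $p\ge5$, write $E_i\colon y^2=x^3+A_ix+B_i$ with unit discriminant: $\bar j_1=0$ forces $\bar c_{4,i}=0$, hence $w(A_i)\ge1$ and $w(j_i)=3w(A_i)\ge3$; $\bar j_1=1728$ forces $w(B_i)\ge1$ and $w(j_i-1728)=2w(B_i)\ge2$. For $p=3$, complete the square to $y^2=x^3+a_2x^2+a_4x+a_6$; then $\bar c_4=\bar a_2^{\,2}=0$ gives $w(a_2)\ge1$, and each term of $c_6=-64a_2^3+288a_2a_4-864a_6$ has $w\ge3$ (using $w(288)\ge2$, $w(864)\ge3$), so $w(j-1728)=2w(c_6)\ge6$. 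For $p=2$, $\bar c_4=\bar a_1^{\,4}=0$ gives $w(a_1)\ge1$, whence $w(b_2)\ge2$, $w(b_4)\ge1$, so $w(c_4)=w(b_2^2-24b_4)\ge4$ and $w(j)=3w(c_4)\ge12$. In each case $\bar j_1=\bar j_2$ makes the valuations of $j_1-j_2$ (or $(j_1-1728)-(j_2-1728)$) at least the stated bound.

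The step I expect to be the genuine obstacle is this local lemma in residue characteristics $2$ and $3$: the naive estimate $w(j)\ge3$ is too weak there, and one must exploit the precise shape of the Weierstrass equation — notably that, when $p=3$, the ``limiting'' value is $1728$ (of $3$-adic valuation $3$) rather than $0$ — to get the extra divisibility, while keeping the argument uniform over all ramified $L$ (the computations above are). Everything else is formal once the lemma and the elementary root count are in hand.
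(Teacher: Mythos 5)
Your reduction of the two--variable bound to a Gauss--valuation estimate, and the count that exactly $C_J(N,\pi)$ of the roots $\Theta_C$ have positive valuation, are both fine; this is essentially a generic-point version of the paper's finite interpolation argument (Lemma \ref{lem:interpolation} plus Proposition \ref{prop:general}). The genuine gap is in the step that is supposed to produce $n_v$, and it is exactly the obstacle you flag at the end, which your computations do not overcome. First, you need a curve $\mathcal E$ with the \emph{prescribed} $j$-invariant $\pi^{n_v}Y+J$ \emph{and good reduction} over some extension of $(K(Y),w)$; such a curve need not exist over any extension unramified for $w$. For instance, for $p\geq 5$ a good-reduction curve with $j$-invariant $j_0$, $w(j_0)>0$, exists over an unramified extension only when $3\mid w(j_0)$ (and one needs $2\mid w(j_0-1728)$ in the $1728$ case), while for $p=2,3$ the constraints are stricter still; so in general $\mathcal E$ and the quotients $\mathcal E/C$ are forced into a ramified extension $L$, whose value group is $\tfrac1e\mathbb Z$. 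Second, over such an $L$ your local lemma is false with the normalization $w(\pi)=1$: take $p\geq5$, $E_1:y^2=x^3+1$ and $E_2:y^2=x^3+\pi^{1/2}x+1$ over $K(\pi^{1/2})$; both have good reduction, both reductions have $j=0$, yet $w\bigl(j(E_1)-j(E_2)\bigr)=3/2<3=\tfrac12\#\Aut$. Your Weierstrass computations silently use ``$w(\cdot)>0\Rightarrow w(\cdot)\geq1$'' (e.g. $w(A_i)\geq1$, $w(a_1)\geq1$, $w(a_2)\geq1$), i.e. integrality of the value group, which is precisely what ramification destroys; the true general bound degrades to $\tfrac1{2e}\#\Aut$, which no longer yields $w(\Theta_C)\geq n_v$. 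So the claim that the computation is ``uniform over all ramified $L$'' does not hold, and the key inequality $w(\Theta_C)\geq n_v$ is unproved.

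For comparison, the paper sidesteps this exactly by never prescribing $j$-invariants: the auxiliary curves are constructed over $A$ itself (so no ramification enters) as Serre--Tate/Lubin--Tate deformations of the reduction of $E$, and the lower bound $n\geq\tfrac12\#\Aut_{A/\pi}(E)$ comes from the Gross--Zagier isomorphism-counting formula $v(j(E)-j(E'))=\tfrac12\sum_{m}\#\Isom_{A/\pi^m}(E,E')$ of \cite{GZ1985}, after which Lemma \ref{lem:interpolation} and Proposition \ref{prop:general} convert valuations at $\psi(N)+1$ interpolation points into coefficient bounds. To rescue your route you would need either a ramification-robust form of your local lemma (which is what the Gross--Zagier/deformation input provides) or an argument that all relevant curves can be realized with good reduction over an extension of $(K(Y),w)$ with value group $\mathbb Z$; as written, neither is supplied.
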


In residue characteristics $p=2$ or $3$ the coefficients $a_{i,j}$ typically have larger valuations than those obtained in Theorem \ref{thm:bigprimes}. In the special case $E_1=E_2 = E$, we use V\'elu's formulae to obtain the following technical result. 
Suppose $E$ is defined by a minimal Weierstrass equation over $A$ with good reduction
\begin{equation}\label{eq:WE}
E : y^2 + a_1xy + a_3y = x^3 + a_2x^2 + a_4x + a_6
\end{equation}
and define as usual the associated quantities in $A$:
\begin{align*}
    b_2 & = a_1^2+4a_2, & b_4 & = a_1a_3 + 2a_4, & b_6 & = a_3^2 + 4a_6 \\
    c_4 & = b_2^2 - 24b_4, & c_6 & = -b_2^3 + 36b_2b_4 - 216 b_6, 
    & \Delta & = (c_4^3-c_6^2)/1728.
\end{align*}
Then $j(E) = c_4^3/\Delta$ and $v(\Delta)=0$. 

For $N > 1$, we now define the following polynomials in $K[x_0,x_1,x_2,x_3,y_0]$. If $N$ is odd, then
\begin{align}\label{eq:tw_odd}
    t & := 6x_2 + b_2x_1 + \left(\frac{N-1}{2}\right)b_4, \\
    w & := 10x_3 + 2b_2x_2 + 3b_4x_1 + \left(\frac{N-1}{2}\right)b_6, \nonumber
\end{align}
whereas, if $N$ is even, we define
\begin{align}\label{eq:tw_even}
    t & := 6x_2 + b_2x_1 + \left(\frac{N-2}{2}\right)b_4 +
    3x_0^2 + 2a_2x_0 + a_4 - a_1y_0 \\
    w & := 10x_3 + 2b_2x_2 + 3b_4x_1 + \left(\frac{N}{2}\right)b_6
    + 7x_0^3 + (b_2+2a_2)x_0^2 + (2b_4+a_4)x_0 - a_1x_0y_0.\nonumber
\end{align}
Finally, define
\begin{align}\label{eq:g}
g & := [(c_4+240t)^3c_6^2 - c_4^3(c_6 + 504b_2t + 6048w)^2]/1728 \in K[x_0,x_1, x_2,x_3, y_0] \\
n_v & = v(g) := \max \{n \;|\; g \in \pi^nA[x_0,x_1,x_2,x_3,y_0]\}.\nonumber
\end{align}

\begin{thm}\label{thm:smallprimes}
    Suppose $p=2$ or $3$.
    Let $E/K$ be an elliptic curve with good reduction and $J = j(E)\in \pi A$.
    Let $N > 1$ with $p\nmid N$. Then the coefficients of $\Phi_N(X+J, Y+J) = \sum_{0\leq i,j \leq \psi(N)}a_{i,j}X^iY^j\in A[X,Y]$ satisfy
    \[
    v(a_{i,j}) \geq n_v\big(C_J(N,\pi)-i-j\big)
    \]
    for all $i+j < C_J(N,\pi)$, where $n_v$ is defined in (\ref{eq:g}).

    Furthermore, when $p=2$ then $n_v$ only depends on $N \bmod 4$ and when $p=3$, $n_v$ only depends on $N \bmod 6$.
\end{thm}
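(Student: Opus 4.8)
The plan is to extract a per‑isogeny divisibility from V\'elu's formulas and then assemble the stated bound via a Hensel factorisation of $\Phi_N$. Write $C:=C_J(N,\pi)$ and assume $n_v=v(g)\ge 1$ (there is nothing to prove otherwise). For a cyclic subgroup $C'\subset E$ of order $N$, V\'elu's formulas give a Weierstrass model of $E/C'$ sharing $a_1,a_2,a_3$ with $E$ and with $c_4(E/C')=c_4+240t$, $c_6(E/C')=c_6+504b_2t+6048w$, where $t,w$ are the polynomials (\ref{eq:tw_odd})--(\ref{eq:tw_even}) evaluated at $x_m=\sum_Q x(Q)^m$ ($Q$ running over representatives modulo $\pm1$ of the nontrivial points of $C'$) and, if $2\mid N$, at the coordinates $(x_0,y_0)$ of the $2$-torsion point of $C'$. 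Combining this with $\Delta=(c_4^3-c_6^2)/1728$ and the same identity for $E/C'$ gives
\[
j(E/C')-J=\frac{-\,g(x_0,x_1,x_2,x_3,y_0)}{\Delta\cdot\Delta(E/C')}.
\]
Since $E$ has good reduction, torsion coordinates are integral, so the arguments of $g$ lie in $\bar A$ and $v\big(g(x_0,x_1,x_2,x_3,y_0)\big)\ge v(g)=n_v$. Moreover, if $E/C'$ has the same reduction as $E$, then reducing this model modulo $\pi$ produces the V\'elu model of $\bar E\to\bar E/\overline{C'}$, where $\overline{C'}\subset\bar E[N]$ makes sense because $\bar E[N]$ is \'etale (as $p\nmid N$); this is a smooth model, so $v(\Delta(E/C'))=0$, and hence $v\big(j(E/C')-J\big)\ge n_v$ for every such $C'$.

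The next step is the explicit computation of $v(g)$, which also yields the ``furthermore''. Because the equation is minimal and $J\in\pi A$, the reduction of $E$ is the supersingular curve with $j=0$; this constrains the $\pi$-adic orders of $c_4,c_6$ enough to guarantee $g\in A[x_0,\dots,y_0]$ (so $n_v\ge0$) and lets one compute the content of $g$ directly. The only way $N$ enters $g$ is through the integers $\frac{N-1}{2},\frac{N-2}{2},\frac N2$ appearing in (\ref{eq:tw_odd})--(\ref{eq:tw_even}), which occur essentially linearly; the content therefore only registers whether $p$ divides each of them and which of (\ref{eq:tw_odd})/(\ref{eq:tw_even}) is in force — equivalently, it depends on $N$ only through $N\bmod4$ when $p=2$ and through $N\bmod6$ when $p=3$. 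Carrying the computation out (using $v(\Delta)=0$ to eliminate the spurious dependence on the $a_i$) also gives the numerical value of $n_v$.

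To pass from the V\'elu estimate to all coefficients: recall $\Phi_N$ is monic of degree $\psi(N)$ in each variable and that $C_J(N,\pi)=\ord_X\big(\Phi_N(X+\bar J,\bar J)\bmod\pi\big)$ depends only on $\bar J$. For $\xi\in A$, pick (over a suitable finite extension of $K$, using potential good reduction) an elliptic curve $E'$ with good reduction and $j(E')=J+\pi^{n_v}\xi$, so $\overline{E'}\cong\bar E$ and $C_{j(E')}(N,\pi)=C$; apply the first step to $E'$. Hensel factors $\Phi_N(j(E'),Y)$ as $P(Y)u(Y)$ with $P$ monic of degree $C$ whose roots reduce to $\bar J$, namely the $j(E'/D)$ with $E'/D\equiv\bar E$; each such root $\rho$ satisfies $v(\rho-j(E'))\ge n_v$, hence $v(\rho-J)\ge n_v$ since $v(j(E')-J)\ge n_v$. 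Expanding $P(Y+J)$ by elementary symmetric functions and multiplying by $u(Y+J)\in A[Y]$, the $Y^j$-coefficient of $\Phi_N(j(E'),Y+J)$ has valuation $\ge n_v(C-j)$; but that coefficient equals $\sum_i a_{i,j}\pi^{n_v i}\xi^i$, where $\Phi_N(X+J,Y+J)=\sum a_{i,j}X^iY^j$. As $\xi$ ranges over $A$ and $A/\pi$ is infinite, a polynomial over $A$ all of whose values lie in $\pi^m A$ lies in $\pi^m A[X]$; with $m=n_v(C-j)$ this forces $v(a_{i,j})\ge n_v(C-i-j)$ for all $i$ and all $j<C$, hence whenever $i+j<C$.

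I expect the content computation to be the main obstacle: determining $v(g)$ in residue characteristic $2$ and $3$ from the V\'elu quantities $t,w$ is a tight, slack-free calculation and is easy to get wrong, yet it is precisely what pins down $n_v$ and establishes the ``furthermore''. A secondary difficulty is the last step, where one must know that the auxiliary curves $E'$ — which exist over suitable extensions for every target $j$-invariant $J+\pi^{n_v}\xi$ — satisfy $v(g_{E'})\ge v(g_E)$ (plausibly because $g$ attains its minimal content at $j$-invariants of the same kind as that of $E$), and where one must track how $v(g)$ scales under ramified base change so as to recover the clean bound over $K$. The minimality input $v(\Delta(E/C'))=0$ in the first step, which is exactly where good reduction and $p\nmid N$ are used, is the remaining essential ingredient.
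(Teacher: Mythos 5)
Your per-isogeny step is essentially the paper's own argument: the identity $j(E/C')-j(E)=-g(x_0,x_1,x_2,x_3,y_0)/(\Delta\,\Delta(E/C'))$, integrality of the kernel coordinates, and $v(\Delta(E/C'))=0$ (your direct ``reduce the V\'elu model'' argument for the latter is a fine substitute for Lemma \ref{lem:minimal}), giving $v(j(E/C')-j(E))\ge v(g)=n_v$; your treatment of the ``furthermore'' also mirrors the paper's content computation. The divergence is in the assembly, and that is where there is a genuine gap --- one you flag yourself but misjudge as ``secondary''. The paper deduces the two-variable bound from Proposition \ref{prop:general}, whose entire difficulty is the \emph{existence} of $\psi(N)+1$ auxiliary curves with good reduction, prescribed $\pi$-adic closeness of $j$-invariants to $J$, and the same per-isogeny constant; it manufactures them as Serre--Tate/Lubin--Tate deformations of $E$, so that each auxiliary curve is congruent to $E$ as a curve (not merely in its $j$-invariant) and provably inherits its behaviour, and then interpolates via Lemma \ref{lem:interpolation}. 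You replace this by ``pick any curve $E'$ with $j(E')=J+\pi^{n_v}\xi$ and good reduction over a suitable finite extension''.

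That replacement is exactly where the proof breaks. First, such an $E'$ need not exist over $K$: for $p=3$ and $\bar{\jmath}=0$ there is no good-reduction model over $\BQ_3^{\ur}$ at all (one would need $v(c_6^2)=v(1728)=3$), so ramified extensions are forced. Second, over a ramified $L$ your step 1 only gives $v(j(E'/D)-j(E'))\ge v(g_{E'})$ with $g_{E'}$ built from an $\mathcal{O}_L$-minimal model, and the inequality $v(g_{E'})\ge n_v=v(g_E)$ (in a fixed normalization) that your Hensel/content argument needs is precisely what is not proved --- and it is not a routine verification: the renormalized constant genuinely depends on the model and field of definition (this is why the exceptional rows of Table \ref{tab:rationalJ} occur exactly when $e_p>1$, and why curves with the same supersingular reduction $\bar{\jmath}=0$ in residue characteristic $2$ have constants $9$, $10$ or $15$ according to their $j$-invariant). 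Nor does the congruence $j(E')\equiv J\bmod\pi^{n_v}$ rescue you cheaply: by Gross--Zagier, each congruence level of the curves can contribute as much as $\tfrac12\#\Aut_{A/\pi}(E)$ (e.g.\ $12$ when $p=2$, $\bar{\jmath}=0$) to the $j$-adic valuation, so a depth-$15$ $j$-congruence only forces the curves to agree to very small depth, far too shallow to compare the contents of $g_{E'}$ and $g_E$, which live at depth $n_v$. Without this input your final step only yields the bound with constant $\min\bigl(n_v,\inf_\xi v(g_{E'_\xi})\bigr)$; supplying it --- by constructing the $E'_\xi$ as deformations of $E$ (the paper's route) or by an explicit Kraus-type existence statement for good models with controlled $c_4,c_6$ --- is the essential missing ingredient, not a loose end.
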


\subsection{Polynomial interpolation}

A natural approach to proving Theorem \ref{thm:main} is the following. Set $Y=0$ to obtain
\[
\Phi_N(X,0) = \sum_{i=0}^{\psi(N)}a_{i,0}X^i = \prod_{E\to E'}(X - j(E')),
\]
where $E$ is an elliptic curve with $j(E)=0$ and the product ranges over elliptic curves $E'$ linked to $E$ via a cyclic isogeny of degree $N$. These are special elliptic curves with complex multiplication by orders in $\BQ(\sqrt{-3})$, so applying methods from e.g. \cite{Campagna2021}, \cite{GZ1985},  or applying V\'elu's formulae \cite{Velu1971} to a suitable model of $E$, gives lower bounds on the $v(j(E))$ which imply Theorem \ref{thm:main} for the coefficients $a_{i,0}$.

To study the coefficients $a_{i,j}$ in general, we need to specialize $Y=y_k=j(\CE_k) - j(E)$ for suitable {\em deformations} $\CE_k$ of the elliptic curve $E$ and then extract our coefficients via polynomial interpolation. The following lemma is key. 

\begin{lemma}\label{lem:interpolation}
    Let $f(Y) = a_0 + a_1Y + \ldots + a_dY^d \in K[Y]$.
    Fix $n\in\BZ$ and
    let $y_0, y_1,\ldots, y_d\in K$ be such that
    \begin{enumerate}
        \item $v(y_0)=v(y_1)=\cdots=v(y_d)=n$,
        \item $v(y_k-y_l) = n$ for all $k\neq l$.
    \end{enumerate}
    Then 
    \[
    v(a_j)  \geq \min_{0\leq k \leq d} v(f(y_k)) - nj
    \quad \text{for all $j=0,1,2,\ldots, d$}.
    \]
\end{lemma}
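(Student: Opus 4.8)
The plan is to prove the inequality by Lagrange interpolation at the $d+1$ points $y_0,\ldots,y_d$, followed by a valuation estimate of each term that appears.

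First I would note that hypothesis (2) forces $v(y_k-y_l)=n$ to be finite, so the $y_k$ are pairwise distinct; hence the polynomial $f$, of degree $\le d$, is completely determined by its values at $y_0,\ldots,y_d$, and
\[
f(Y)=\sum_{k=0}^d f(y_k)\prod_{l\ne k}\frac{Y-y_l}{y_k-y_l}.
\]
Reading off the coefficient of $Y^j$ on both sides gives
\[
a_j=\sum_{k=0}^d(-1)^{d-j}\,\frac{e_{d-j}\big(\{y_l\}_{l\ne k}\big)}{\prod_{l\ne k}(y_k-y_l)}\,f(y_k),
\]
where $e_m$ denotes the $m$-th elementary symmetric polynomial (and $e_0=1$). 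Everything then reduces to bounding the valuation of a single summand.

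For that I would use two elementary facts. Although hypothesis (1) is stated without the index $1$, the ultrametric inequality applied to $y_1=(y_1-y_0)+y_0$ together with (1) and (2) gives $v(y_1)\ge n$; thus $v(y_l)\ge n$ for every $l$. Consequently each monomial of $e_{d-j}(\{y_l\}_{l\ne k})$ is a product of $d-j$ of the $y_l$ and has valuation $\ge n(d-j)$, so $v\big(e_{d-j}(\{y_l\}_{l\ne k})\big)\ge n(d-j)$. On the other hand, additivity of $v$ on products and hypothesis (2) give $v\big(\prod_{l\ne k}(y_k-y_l)\big)=\sum_{l\ne k}v(y_k-y_l)=nd$ exactly. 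Hence the $k$-th summand of $a_j$ has valuation at least $v(f(y_k))+n(d-j)-nd=v(f(y_k))-nj$, and the ultrametric inequality over $k$ yields $v(a_j)\ge \min_{0\le k\le d}v(f(y_k))-nj$.

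There is no genuinely hard step here: the argument just repackages the Lagrange interpolation formula, using additivity of $v$ on products and the ultrametric inequality on sums. The only points deserving a line of care are that these estimates must be seen to hold for negative $n$ as well — they do, since $n(d-j)$ and $nd$ are simply integers and $v$ is additive on products — and that $v(y_1)\ge n$ has to be deduced rather than assumed, since the index $1$ is deliberately absent from hypothesis (1). A variant via divided differences would also work, but Lagrange interpolation makes the bookkeeping most transparent.
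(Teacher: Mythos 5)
Your proof is correct and is essentially the paper's argument made explicit: the paper solves the same interpolation system by Cramer's rule on the Vandermonde matrix, and your Lagrange-basis computation is exactly that solution written out, with the same valuation bookkeeping ($nd$ for the denominator, $n(d-j)$ for the elementary symmetric numerator, and the ultrametric inequality over $k$). Your remark that $v(y_1)\ge n$ must be deduced via the ultrametric inequality (the index $1$ being absent from hypothesis (1), evidently a typo) is a point of care the paper's proof glosses over.
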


Conversely, if $v(a_j) \geq B - nj$ for all $j$, then clearly $v(f(y_k)) \geq B$.

\begin{proof}
    We solve for the coefficients $a_j$ in the linear system
    \[
    a_0 + a_1y_k + \cdots + a_dy_k^d = f(y_k), \quad k=0,1,2,\ldots d.
    \]
    By Cramer's rule, we get $a_j = \frac{M_j}{V}$, where $V = \det(y_k^i)_{0\leq k,i \leq d} = \pm\prod_{k<i}(y_k-y_i)$ is the Vandermonde determinant and $M_j$ is the determinant where the $j$th column of $V$ has been replaced by $(f(y_k))_{0\leq k\leq d}$.

    By assumption, we have $v(V) = \sum_{k<i}v(y_k-y_i) = \frac{d(d+1)}{2}n$.
    Factoring out suitable powers of $\pi$ from the columns of $M_j$, we find that
    \[
    v(M_j) \geq  \left( \frac{d(d+1)}{2} - j\right)n + \min_{0\leq k \leq d} v(f(y_k)).
    \]
    The result follows.
\end{proof}

The above single-variable interpolation lemma is used to prove the following result for our two-variable modular polynomials.

\begin{prop}\label{prop:general}
	Let $E_1$ and $E_2$ be elliptic curves over $K$ with good reduction and $j$-invariants $J_1 = j(E_1)$ and $J_2 = j(E_2)$.
    Let $N > 1$ with $v(N)=0$.
    Fix positive integers $n_1$ and $n_2$, and suppose that there exist elliptic curves $\CE_k/K$, $k=0,1,\ldots, \psi(N)$ satisfying the following conditions:
    \begin{enumerate}
        \item Each $\CE_k$ has good reduction;
        \item $v(j(\CE_k)-J_2) = v(j(\CE_k)-j(\CE_l)) = n_2$ for all $k\neq l$;
        \item For every $k$ and every elliptic curve $\tilde{\CE}_k$ linked to $\CE_k$ by a cyclic isogeny of degree $N$, we have
        \[
        v(j(\tilde{\CE}_k) - J_1) > 0 \Longrightarrow v(j(\tilde{\CE}_k) - J_1) \geq n_1.
        \]
    \end{enumerate}
    Then the coefficients of $\Phi_N(X+J_1, Y+J_2) = \sum_{0\leq i,j \leq \psi(N)}a_{i,j}X^iY^j\in A[X,Y]$ satisfy
    \[
    v(a_{i,j}) \geq n_1 C_{J_1,J_2}(N,\pi)-n_1 i-n_2 j,
    \]
    where $C_{J_1,J_2}(N,\pi) = \ord_X\big(\Phi_N(X+J_1,J_2) \bmod \pi\big).$
\end{prop}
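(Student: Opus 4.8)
The plan is to combine the interpolation lemma with the two-variable structure of $\Phi_N$. The key observation is that $\Phi_N(X+J, Y+J)$, viewed as a polynomial in $Y$ with coefficients in $K[X]$, has the property that substituting $Y = j(E_k) - J$ gives $\Phi_N(X+J, j(E_k))$, which (up to sign and leading coefficient) factors as $\prod_{\tilde E} (X + J - j(\tilde E))$ over the $\psi(N)$ curves $\tilde E$ linked to $E_k$ by a cyclic $N$-isogeny. By condition (3), each factor $X + J - j(\tilde E)$ either has all its ``action'' concentrated at $X = 0$ modulo high powers of $\pi$ (when $j(\tilde E) \equiv J$), or contributes a unit constant term; more precisely, writing $\Phi_N(X+J, j(E_k)) = \sum_i b_i^{(k)} X^i$, I would first show that $v(b_i^{(k)}) \geq C_J(N,\pi) - i$ for all $i$. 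This is because exactly $C_J(N,\pi)$ of the isogenous curves $\tilde E$ satisfy $\tilde E \equiv E_k \bmod \pi$ (here one needs that $E_k \equiv E \bmod \pi$, which follows from condition (2) since $v(j(E_k) - J) = n \geq 1$), hence $j(\tilde E) \equiv J \bmod \pi$, so those $C_J(N,\pi)$ linear factors are $\equiv X \bmod \pi$, and the product of all factors has $\pi$-adic valuation at least $C_J(N,\pi) - i$ in its degree-$i$ coefficient.

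Next, I would sharpen this: since condition (3) says $v(j(\tilde E) - j(E_k)) \geq n$ whenever it is positive, and $v(j(E_k) - J) = n$, for each of the $C_J(N,\pi)$ special curves we get $v(X + J - j(\tilde E))$-contributions controlled by $n$ rather than just $1$. Concretely, $\Phi_N(X+J, j(E_k)) = c \cdot \prod_{\tilde E}(X - (j(\tilde E) - J))$ where $c = a_{\psi(N),0}$ is (essentially) a leading coefficient that is a $\pi$-adic unit (the coefficient of $X^{\psi(N)}$ in $\Phi_N$, which is $\pm 1$ up to the diagonal term — one should be slightly careful here, but $\Phi_N(X,Y)$ is monic of degree $\psi(N)$ in each variable for $N$ not a perfect square, and the excerpt's normalization handles this). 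Among the $\psi(N)$ roots $j(\tilde E) - J$, exactly $C_J(N,\pi)$ have valuation $\geq n$ and the rest have valuation $0$ (they reduce to roots of $\Phi_N(X+J,J) \bmod \pi$ that are nonzero, i.e. to $j$-invariants of curves $N$-isogenous to $E_{A/\pi}$ but not isomorphic to it). Hence the degree-$i$ coefficient of $\Phi_N(X+J, j(E_k))$ has valuation $\geq n(C_J(N,\pi) - i)$ for $i < C_J(N,\pi)$, uniformly in $k$.

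Finally, I apply Lemma~\ref{lem:interpolation} in the $Y$-variable. For each fixed exponent $i$ with $i < C_J(N,\pi)$, set $y_k := j(E_k) - J$ for $k = 0, 1, \ldots, \psi(N)$; by condition (2) these satisfy the hypotheses of the lemma with the given $n$ (valuations of the $y_k$ and of the differences $y_k - y_l$ all equal $n$). The polynomial $f_i(Y) := \sum_j a_{i,j} Y^j$ has degree at most $\psi(N)$ in $Y$, and $f_i(y_k)$ is precisely the degree-$i$ coefficient of $\Phi_N(X+J, j(E_k))$, hence $v(f_i(y_k)) \geq n(C_J(N,\pi) - i)$ for every $k$. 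Lemma~\ref{lem:interpolation} then yields $v(a_{i,j}) \geq n(C_J(N,\pi) - i) - nj = n(C_J(N,\pi) - i - j)$, which is exactly the claim (the bound is only non-vacuous, and only asserted, when $i + j < C_J(N,\pi)$). The main obstacle I anticipate is the bookkeeping in the step asserting that exactly $C_J(N,\pi)$ of the roots $j(\tilde E) - J$ have valuation $\geq n$ while the others are units: one must verify that $C_J(N,\pi) = \ord_X(\Phi_N(X+J,J) \bmod \pi)$ really does count these special curves with the right multiplicity (no root of valuation strictly between $0$ and $n$ can occur, which is where condition (3) is essential, and no ``extra'' cancellation in the leading coefficient), and one should confirm that the curves $\tilde E$ can be taken with good reduction so that their $j$-invariants lie in $A$ and reduction commutes with the isogeny correspondence.
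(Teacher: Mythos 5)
Your proposal is correct and follows essentially the same route as the paper: substitute $Y = j(E_k)-J$, use that exactly $C_J(N,\pi)$ of the roots $j(\tilde E)-J$ of the monic polynomial $\Phi_N(X+J, j(E_k))$ have valuation $\geq n$ (the rest being units) to bound the coefficients $b_i(y_k)$ by $n(C_J(N,\pi)-i)$, and then apply Lemma~\ref{lem:interpolation} in the $Y$-variable at the points $y_k$. The bookkeeping points you flag (monicity in $X$, integrality of the roots, and the count via $\Phi_N(X+J,j(E_k)) \equiv \Phi_N(X+J,J) \bmod \pi$) are exactly how the paper handles them, using that $E[N]$ is unramified and the residue field is algebraically closed so all the isogenous curves are defined over $K$ with $j$-invariants in $A$.
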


\begin{proof}
	For ease of notation, let $C = C_{J_1,J_2}(N,\pi).$
Write 
\begin{align*}
    & \Phi_N(X+J_1,Y+J_2) = b_0(Y) + b_1(Y)X + \cdots + b_{\psi(N)-1}(Y)X^{\psi(N)-1} + X^{\psi(N)} \\
    & b_i(Y) = a_{i,0} + a_{i,1}Y + \cdots + a_{i,\psi(N)}Y^{\psi(N)}, \quad i=0,\ldots, \psi(N).
\end{align*}

For each $k$, let $y_k = j(\CE_k) - J_2$. The roots of $\Phi_N(X, y_k + J_2)$ are the $j$-invariants of elliptic curves $\CE_{k,m}$ linked to $\CE_k$ by a cyclic $N$-isogeny. Since $v(N)=0$, $\CE_k[N]$ is unramified and these elliptic curves and isogenies are all defined over $K$, since unramified extensions of $K$ correspond to extensions of the residue field $A/\pi$, which is algebraically closed.

By definition, $C$ of these roots $j(\CE_{k,m})$, $m=1,2,\ldots, C$, satisfy 
$v(j(\CE_{k,m})-J_1) >0$, so by assumption $v(j(\CE_{k,m} )-J_1) \geq n_1$, and the rest satisfy $v(j(\CE_{k,m})-J_1)=0$. 

The coefficients $b_i(y_k)$ of 
$\Phi_N(X+J_1, y_k+J_2)$ are symmetric forms in the roots $j(\CE_{k,m})-J_1$ which satisfy $v(j(\CE_{k,m})-J_1) \geq n_1$ for $m=1,2,\ldots, C$, so it follows that
\[
v(b_i(y_k)) \geq n_1(C -i), \quad i = 0, 1, \ldots, C.
\]

Now applying Lemma \ref{lem:interpolation} to the polynomials $b_i(Y)$ with the interpolation points $y_k$ completes the proof.
\end{proof}

\subsection{Deformations of elliptic curves.}

We start with the following result (\cite[Prop. 2.3]{GZ1985}):

\begin{prop}\label{GZ}
	Let $E_1$ and $E_2$ be elliptic curves over $K$ with good reduction and suppose $M\geq 1$ is the largest integer for which $E_1$ and $E_2$ are isomorphic over $A/\pi^M$. Then
	\begin{align*}
		v\big(j(E_1)-j(E_2)\big) & = \frac{1}{2}\sum_{m=1}^M\#\Isom_{A/\pi^m}(E_1,E_2) \\
		& = \frac{1}{2}\sum_{m=1}^M\#\Aut_{A/\pi^m}(E_1) \geq \frac{1}{2}\#\Aut_{A/\pi}(E_1). 
	\end{align*}
\end{prop}

Our goal now is to construct elliptic curves $\CE_k/K$ satisfying the hypotheses of Proposition~\ref{prop:general}. 

\begin{prop}\label{LSTG}
	Let $E/K$ be an elliptic curve with good reduction, and let $M\geq 1$ be a positive integer. Then there exist infinitely many elliptic curves $\CE_1, \CE_2, \CE_3, \ldots$ over $K$ such that each $\CE_k$ reduces to $E$ over $A/\pi^M$, but $E, \CE_1, \CE_2, \CE_3, \ldots$ are pairwise non-isomorphic over $A/\pi^{M+1}$. 
	
	In particular, $v(j(\CE_k) - j(E)) = v(j(\CE_k) - j(\CE_l)) = \frac{1}{2}\sum_{m=1}^M\#\Aut_{A/\pi^m}(E)$ for all $k\neq l$.  
\end{prop}

\begin{proof}
	By the Serre-Tate lifting theorem \cite[Thm. 3.3]{Conrad2004} and the Grothendieck existence theorem \cite[Thm 3.4]{Conrad2004}, the deformations of $E$
	are in bijection with the deformations of the $p$-divisible group $E[p^\infty]$. 
	
	When $E_{A/\pi}$ is supersingular, then $E[p^\infty] \cong \hat{E}$ is the formal group of $E$ which has height 2 and by \cite{LT1966} its deformations are given by a one-parameter family $\Gamma(t)$ with $t\in \pi A$. Let $t_0\in \pi A$ be the parameter for which $\hat{E} = \Gamma(t_0)$.
	Choose $t_k = t_0 + \pi^M \varepsilon_k$, for $\varepsilon_k\in A^*$ with pairwise distinct reductions modulo $\pi$. These correspond to infinitely many liftings 
	$\CE_k$ over $A$ which reduce to $E$ modulo $\pi^M$, but which are pairwise non-isomorphic over $A/\pi^{M+1}$ as deformations. Since the automorphism groups of elliptic curves are finite, we can choose an infinite subsequence of $\CE_k$'s which are pairwise non-isomorphic over $A/\pi^{M+1}$ as elliptic curves.
	
	When $E_{A/\pi}$ is ordinary, the deformations are parametrized by the Serre-Tate parameter $q\in 1 + \pi A$ (see \cite{Messing1972} or \cite{Meusers2017}). Let $q_0$ be the parameter associated to $E/A$ itself and again choose $q_k = q_0 + \pi^M \varepsilon_k$ for $\varepsilon_k\in A^*$ as above to obtain infinitely many suitable $\CE_k/A$.
	
	The claim on the $j$-invariant valuations now follows from Proposition \ref{GZ}.
\end{proof}

Theorem \ref{thm:bigprimes} now follows easily:

\begin{proof}[Proof of Theorem \ref{thm:bigprimes}]
	Let $E_1$ and $E_2$ be elliptic curves over $K$ with good reduction. By Proposition \ref{LSTG} with $M=1$ we obtain suitable deformations $\CE_k$ of $E_2$ with $v(j(\CE_k) - j(E_2)) = v(j(\CE_k) - j(\CE_l)) =  \frac{1}{2}\#\Aut_{A/\pi}(E_2)=n_2$ for $k\neq l$.
	
	Now let $\CE_k \to \tilde\CE_k$ be an isogeny of degree $N$. Since $v(N) = 0$, $\tilde\CE_k$ again has good reduction and Proposition \ref{GZ} gives $v(j(\tilde{\CE}_k) - j(E_1)) > 0 \Longrightarrow v(j(\tilde{\CE}_k) - j(E_1)) \geq \frac{1}{2}\#\Aut_{A/\pi}(E_1) = n_1$.  
	
	The hypotheses of Proposition \ref{prop:general} are thus satisfied and $v(a_{i,j}) \geq n_1 C_{J_1,J_2}(N,\pi) -n_1 i -n_2 j$ follows. It remains to notice that $\#\Aut_{A/\pi}(E_1) C_{J_1,J_2}(N,\pi) = \#\Aut_{A/\pi}(E_2) C_{J_2,J_1}(N,\pi)$, since both equal the total number of cyclic $N$-isogenies from $E_1$ to $E_2$ over $A/\pi$ (the second term counting the dual isogenies).
\end{proof}

\subsection{The case $E_1 = E_2$}

We need one more lemma.

\begin{lemma}\label{lem:minimal}
    Let $f : E \to E'$ be an isogeny of degree $N$ with $v(N)=0$ between elliptic curves over $A$ given by Weierstrass equations of the form (\ref{eq:WE}). Let $\omega_E = \frac{dx}{2y+a_1x+a_3}$ and $\omega_{E'}=\frac{dx}{2y + a'_1x+a'_3}$ be the invariant differentials of $E$ and $E'$ and suppose $f$ is normalized such that $f^*\omega_{E'} = \omega_E$. Then if the Weierstrass equation for $E$ is minimal, so is the Weierstrass equation for $E'$.  
\end{lemma}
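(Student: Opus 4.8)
The key point is that "minimal" is a statement about the valuation of the discriminant: a Weierstrass equation over $A$ is minimal if and only if $v(\Delta)$ cannot be lowered by a coordinate change $(x,y)\mapsto(u^2x+r,\ u^3y+su^2x+t)$ with $u,r,s,t\in K$, $u\ne0$. Under such a change the discriminant scales as $\Delta\mapsto u^{-12}\Delta$, so minimality is equivalent to the statement that $v(\Delta)<12$ whenever the equation can be put in the form of a Weierstrass equation with coefficients in $A$ after such a change --- more precisely, minimality means $v(\Delta)$ equals the minimal valuation $v(\Delta_{\min})$ of the discriminant over all integral models. So I want to compare $v(\Delta_E)$ with $v(\Delta_{E'})$, using that $f$ is a normalized isogeny of degree $N$ with $v(N)=0$.

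\textbf{Step 1: relate the two discriminants via the dual isogeny.} Let $\hat f:E'\to E$ be the dual isogeny; then $\hat f\circ f=[N]_E$ and $f\circ\hat f=[N]_{E'}$. The normalization $f^*\omega_{E'}=\omega_E$ forces $\hat f^*\omega_E=N\,\omega_{E'}$ (since $[N]^*\omega=N\,\omega$ and $\hat f^*f^*=[N]^*$). Now, $\Delta$ is, up to the fixed scalar, $(2\pi)^{12}$ times the square of the discriminant of the period lattice, and more usefully: if $\phi:C_1\to C_2$ is any isogeny with $\phi^*\omega_2=\lambda\,\omega_1$, then there is the standard relation
\[
\Delta_{C_1}=\lambda^{12}\,(\deg\phi)^{?}\,\Delta_{C_2}\cdot(\text{unit?})
\]
--- this is the part I need to pin down. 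The clean algebraic fact I would use instead: over any base, for a normalized isogeny ($\lambda=1$) of degree $N$, one has $\Delta_{E'}\,|\,\Delta_E$ and $\Delta_E\,|\,N^{12}\Delta_{E'}$ in $A$ (apply the same to $\hat f$ after rescaling its differential by $N$). Concretely: from $\hat f$ with $\hat f^*\omega_E=N\omega_{E'}$, rescaling coordinates on $E'$ by $u=1/N$ gives a normalized isogeny $E'_{\text{new}}\to E$, where $E'_{\text{new}}$ has $\Delta=N^{12}\Delta_{E'}$; if Tate's algorithm / the theory of Néron models gives that a \emph{normalized} isogeny between \emph{any} Weierstrass models over $A$ forces $v(\Delta_{\text{source}})\ge v(\Delta_{\text{target}})$ when both sides are integral, then we get $v(\Delta_E)\ge v(\Delta_{E'})$ and $v(N^{12}\Delta_{E'})\ge v(\Delta_E)$, i.e. $v(\Delta_E)\le v(\Delta_{E'})+12v(N)=v(\Delta_{E'})$.

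\textbf{Step 2: combine.} Steps above give $v(\Delta_E)=v(\Delta_{E'})$. Since the Weierstrass equation for $E$ is minimal, $v(\Delta_E)=v(\Delta_{E,\min})$. The equation for $E'$ given in the hypothesis is an integral Weierstrass equation (we are handed specific $a_i'$), so $v(\Delta_{E'})\ge v(\Delta_{E',\min})$. But $E$ and $E'$ are isogenous over $A$ (hence over $K$) by a degree-$N$ isogeny with $v(N)=0$; isogenies of degree prime to the residue characteristic preserve the Néron model's special fibre up to... no --- more simply, $v(\Delta_{\min})$ is the conductor-related invariant and I need $v(\Delta_{E',\min})\ge v(\Delta_{E,\min})$ as well. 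Symmetrically (running the argument for $\hat f$, suitably normalized and rescaled), I get $v(\Delta_{E,\min})=v(\Delta_{E',\min})$. Putting it together: $v(\Delta_{E'})=v(\Delta_E)=v(\Delta_{E,\min})=v(\Delta_{E',\min})$, so the given equation for $E'$ is minimal. $\square$

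\textbf{Main obstacle.} The crux is Step 1: establishing that a \emph{normalized} isogeny $f:E\to E'$ between integral Weierstrass models over $A$, of degree $N$ with $v(N)=0$, satisfies $v(\Delta_E)\ge v(\Delta_{E'})$ (equivalently, using the rescaled dual, the reverse inequality up to $12v(N)=0$). I expect to prove this by reduction mod $\pi$: the reduction $\bar f:\bar E\to\bar E'$ is a nonzero isogeny of degree $N$ coprime to $p$ over the algebraically closed residue field, hence étale and an isomorphism on tangent spaces; if $\bar E$ is smooth (resp. has a given reduction type) then so does $\bar E'$, because $\bar f$ extends to an isomorphism of Néron models on identity components when $p\nmid N$ --- more carefully, the connected-component group and the reduction type are isogeny-invariants for $\ell$-isogenies with $\ell\ne p$, which is a standard consequence of the theory of Néron models (Bosch--Lütkebohmert--Raynaud). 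From equal reduction types, $v(\Delta_{E,\min})=v(\Delta_{E',\min})$, and the normalization $f^*\omega_{E'}=\omega_E$ is exactly what guarantees that $f$ does not rescale the "size" of the model, so that minimality of one model transfers to the other. Making this last implication airtight --- i.e., that a normalized isogeny carries minimal models to minimal models, not merely "models of the same reduction type" --- is where I would spend the most care, likely invoking that $\omega_E$ generates $H^0(E,\Omega^1)$ over $A$ iff the model is minimal... which is false in general, so in fact the right statement is about Néron differentials, and I would phrase the whole argument in terms of comparing the given $\omega_{E'}$ with a Néron differential on $E'$ and using $f^*\omega_{E'}=\omega_E$ together with minimality of $E$ to force $\omega_{E'}$ to be a Néron differential as well.
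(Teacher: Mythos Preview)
Your Steps~1--2 rest on claims that are false in general. You assert that for isogenies of degree prime to $p$ the reduction type, the component group, and hence $v(\Delta_{\min})$ are preserved. They are not: for a Tate curve $E_q$ with $v(q)=n$ (type $I_n$, so $v(\Delta_{\min})=n$), an $\ell$-isogeny with $\ell\neq p$ takes $E_q$ to $E_{q^\ell}$ or $E_{q^{1/\ell}}$, of type $I_{n\ell}$ or $I_{n/\ell}$. Only the identity component of the N\'eron special fibre (good/multiplicative/additive) is an isogeny invariant, not the Kodaira symbol. So your conclusion $v(\Delta_{E,\min})=v(\Delta_{E',\min})$ fails, and with it Step~2. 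As for Step~1, the inequality $v(\Delta_E)\ge v(\Delta_{E'})$ for normalized isogenies between arbitrary integral models is never actually established; your reduction-mod-$\pi$ sketch does not prove it, and in the good-reduction case (which is all the paper needs) that inequality is equivalent to the lemma itself, so the argument is circular.

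Your final paragraph, however, lands on exactly the right idea, and it is precisely the paper's proof. Take a change of variables $\iota:E'\to\tilde E'$ to a minimal model, with scaling factor $u$; then $(\iota\circ f)^*\omega_{\tilde E'}=u\,\omega_E$. Since $E$ and $\tilde E'$ are both minimal, $\omega_E$ and $\omega_{\tilde E'}$ are N\'eron differentials, and the standard fact (the paper cites \cite[Lemmas~4.3 and~4.4]{DD15}) that the scaling factor between N\'eron differentials under a degree-$N$ isogeny lies in $A$ with valuation at most $v(N)=0$ forces $u\in A^*$. Hence the change of variables was trivial and $E'$ was already minimal. You should discard Steps~1--2 and simply execute this argument.
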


\begin{proof}
    Let $\iota : E'\to \tilde{E}'$ be the isomorphism corresponding to a change of variables $(x,y) \mapsto (u^2x + r, u^3y + u^2sx + t)$ such that the Weierstrass equation for $\tilde{E}'$ is minimal. Then $(\iota\circ f)^*\tilde\omega_{\tilde{E}'} = u\,\omega_E$. But now $u\in A^*$ by \cite[Lemmas 4.3 and 4.4]{DD15}, so the Weierstrass equation for $E'$ is minimal, too.
\end{proof}

\begin{proof}[Proof of Theorem \ref{thm:smallprimes}.]
	We now assume the residue characteristic of $A/\pi$ is $p=2$ or $3$. 
    Let $E/K$ be an elliptic curve with good reduction, $J=j(E)\in \pi A$ and $v(N)=0$. Let
    \[
    n := \min\{ v(J - j(E')) \; : \; \exists f : E \to E' \; \text{cyclic isogeny of degree $N$}\}.
    \]
    Since $E$ (and thus any $E'$) has supersingular reduction, and $0$ is the only supersingular modulus in characteristic $2$ or $3$, it follows that $n  > 0$. Now by Proposition \ref{GZ} there exists $M \geq 1$ such that 
    $n = \frac{1}{2}\sum_{m=1}^M \#\Aut_{A/\pi^m}(E)$.
    
    Applying Proposition \ref{LSTG} with this $M$, we obtain deformations $\CE_k$ of $E$ satisfying $v(j(\CE_k)-J) = v(j(\CE_k) - j(\CE_l)) = n$ for all $k\neq l$. 
    
    Since $p\nmid N$, cyclic subgroups of $\CE_k[N]$ reduce to cyclic subgroups
    of $E[N]$. Thus a cyclic $N$-isogeny $\CE_k\to\tilde\CE_k$ reduces modulo
    $\pi^M$ to a cyclic $N$-isogeny from $E$ to some elliptic curve $E'$.
    By the definition of $n$ and Proposition \ref{GZ}, this $E'$ is isomorphic
    to $E$ over $A/\pi^M$. Hence $\tilde\CE_k$ is isomorphic to $\CE_k$ over
    $A/\pi^M$, and therefore
    \[
    v(j(\tilde\CE_k)-j(\CE_k))\geq n.
    \]

    It follows from Proposition \ref{prop:general} that $v(a_{i,j}) \geq n(C_J(N,\pi) -i -j)$. It remains to show that $n\geq n_v$, as claimed.

    We use V\'elu's explicit formulae for isogenies \cite{Velu1971}. Suppose $E$ has a minimal Weierstrass equation over $K$ as in (\ref{eq:WE}). Let $f : E \to E'$ be an isogeny with cyclic kernel $\ker f = C \subset E[N]$. If $N$ is even, then $C$ contains one point of order 2, which we denote $Q\in E[2]$. We partition $C$ into disjoint sets $C = R \cup (-R) \cup (C\cap E[2])$, so we have $\# R = \frac{N-2}{2}$ if $N$ is even, and $\#R = \frac{N-1}{2}$ if $N$ is odd.

    Then $E'$ is given by a minimal (by Lemma \ref{lem:minimal}) Weierstrass equation with coefficients $a_i'$, where
    \begin{align*}
        a'_1 &= a_1, & a'_2 &= a_2, & a'_3 &= a_3, \\
        a'_4 &= a_4 - 5t', & a'_6 &= a_6 -b_2t'-7w' \\
        c'_4 &= c_4 + 240t', & c'_6 &= c_6 + 504b_2t' + 6048w'.
    \end{align*}
    Here $t', w' \in A$ are given by (\ref{eq:tw_odd}) for $N$ odd and (\ref{eq:tw_even}) for $N$ even, where we make the substitutions 
    \begin{align*}
        x_1 & = \sum_{P\in R} x_P, & x_2&= \sum_{P\in R}x_P^2, & x_3 & = \sum_{P\in R}x_P^3 \\
        x_0 &= x_Q, & y_0 &= y_Q. 
    \end{align*}
    Since $E'/K$ has good reduction, we have $v(\Delta')=v(\Delta)=0$ and
    \[
    j(E') - j(E) = \frac{c'_4{}^3c_6^2 - c_4^3c'_6{}^2}{1728\Delta\Delta'},
    \]
    thus $v(j(E')-j(E)) = v(g')$, where $g'\in A$ is the polynomial $g$ from (\ref{eq:g}) with the variables specialized as above. It follows that 
    $v(g') \geq v(g) = n_v$.

    Finally, it remains to show that $n_v = v(g)$ only depends on the residue class of $N$ modulo 4 (when $p=2$) or 6 (when $p=3$). We have $v(t) \in [0,v(6)]$ because of the $6x_2$-term, and $v(w)\in [0,v(2)]$ because of the $10x_3$-term. The value of $N$ enters only via its parity and whether or not $\frac{N-1}{2}$ or $\frac{N-2}{2}$ is divisible by $p$. This concludes the proof of Theorem \ref{thm:smallprimes}.
\end{proof}

\begin{proof}[Proof of Theorem \ref{thm:rationalJ}]
    For each rational singular $j$-invariant $J$ and each prime $p$ we choose a globally minimal model $E/F$ 
    for an elliptic curve with $j(E)=J$ defined over a number field $F/\BQ$ for which
    $E$ has good reduction at the prime $\Fp$ of $F$ above $p$ and for which the ramification index $e_p = e(\Fp|p) = [F:\BQ]$ is minimal.

    Suitable models are found in the online database \cite{lmfdb}, except in the case $D=-27$ and $p=3$. In this case, one does find a model $E/\BQ(\sqrt{-3})$ with discriminant of norm $N_{\BQ(\sqrt{-3})/\BQ}(\Delta) = 3^{4}7^{6}$, and a suitable change of variables with $u=\sqrt[6]{-3}$ gives a global minimal model over $F=\BQ(\sqrt[6]{-3})$ with good reduction at the totally ramified prime above $3$. 

    Now we let $K = F_{\Fp}^{\ur}$ be the maximal unramified extension of the completion of $F$ at the prime $\Fp$ above $p$, normalized so that $v(p) = e_p$. Applying Theorems \ref{thm:bigprimes} and \ref{thm:smallprimes} to $E/K$ gives the result with $n_p = n_v/e_p$. When $p=2$ or $3$, it suffices to compute $v(g)$ for $N \leq 7$. The exceptional cases listed in Table \ref{tab:rationalJ} occur precisely when $e_p > 1$.
\end{proof}

\begin{proof}[Proof of Theorem \ref{thm:main}]
	This is the case $J=0$ and $D=-3$ of Theorem \ref{thm:rationalJ}.
	For $p=2,3,5$, Proposition \ref{prop:fullpsi} gives
	$C_0(N,p)=\psi(N)$. For $p\geq 11$, the condition
	$\left(\frac{-3}{p}\right)\neq 1$ is equivalent to
	$p\equiv 2 \bmod 3$, and the stated bound follows directly.
\end{proof}

\begin{rem}
    If $J\in\bar{\BQ}$ is any singular modulus, then we always find $n_v \geq 15$ when $p=2$ and $v(J)>0$. This follows from \cite[Corollary 2.5]{GZ1985}.
\end{rem}

\begin{rem}
    It is possible to give elementary proofs of Theorems \ref{thm:main} and \ref{thm:rationalJ} for each $J=j(E)$, which do not rely on the deformation theory of elliptic curves (Proposition \ref{LSTG}).
    
    For example, when $J=0$ and $p\neq 3$, one may define
    \[
    \CE_k : y^2 + y = x^3 + \varepsilon_k p x
    \]
    over $K=\BQ_p^\ur$ with $\varepsilon_k\in A^*$. 
    Direct calculations with V\'elu's formulae show that, for infinitely many choices of $\varepsilon_k\in A^*$, $\CE_k$ satisfies the hypotheses of Proposition \ref{prop:general} with $n_2=15$ and $n_p = 3$ when $p\geq 5$.
    
    In the case $J=0$ and $p=3$, let $K=\BQ_3^\ur(\sqrt{-3})$ and define
    \[
    \CE_k : y^2 = x^3 + \varepsilon_k\pi x^2 - \omega x
    \]
    over $K$, where $\omega = \frac{-1-\sqrt{-3}}{2}$ and $\pi = 1-\omega$. 
    Now the calculations are a little longer, but again one finds there are infinitely many choices of $\varepsilon_k\in A^*$ satisfying the hypotheses of Proposition \ref{prop:general} with $n_v = 6$; and when $N \equiv 1 \bmod 3$ one may choose $\varepsilon_k = 1 + \varepsilon'_k\pi\in A^*$ to obtain $n_v=9$. Theorem \ref{thm:main} then follows with $n_3 = n_v/e_3 = n_v/2$.
\end{rem}

\section*{Acknowledgements}  
The author would like to thank Fabien Pazuki, Ren\'e Schoof and John Voight for insightful questions and useful discussions and Haiyang Wang for sending him advance versions of \cite{Wang-prep}.

The author acknowledges the help of AI tools (Claude Opus 4.8 and ChatGPT 5.5) in suggesting minor clarifications of some arguments.

\bibliographystyle{plain}
\bibliography{Divisibility}

\end{document}